%% file: main.tex
\documentclass[10pt,a4paper,twoside,reqno]{amsart}

\input{Preamble}
\usepackage{mlmodern}

\begin{document}

\title[Kenmotsu manifolds and spin-c Killing spinors]{Kenmotsu manifolds and spin-c Killing spinors}

\author[J.\ L.\ Carmona Jiménez]{José Luis Carmona Jiménez \orcidlink{0009-0002-3234-1718}}
\address{Institute of Mathematics “Simion Stoilow” of the Romanian Academy, 21 Calea Grivitei, 010702 Bucharest, Romania}
\email{jcarmona@imar.ro}

\author[A.\ Gil-García]{Alejandro Gil-García \orcidlink{0000-0002-9370-241X}}
\address{Scuola Internazionale Superiore di Studi Avanzati, Trieste, Italy}
\email{agilgarc@sissa.it}

\author[C.\ S.\ Shahbazi]{C.\ S.\ Shahbazi \orcidlink{0000-0003-1185-9569}}
\address{Departamento de Matem\'aticas, Universidad UNED - Madrid, Reino de Espa\~na}
\email{cshahbazi@mat.uned.es}

\begin{abstract}
Using the theory of complex spinorial forms, we prove that an odd-dimensional Riemannian manifold admits a pure spin-c Killing spinor with an imaginary Killing function $i\mu$ if and only if it is an exact $\mu$-Kenmotsu manifold, thereby obtaining an extension of a recent result by the first named author that, under the purity assumption, does not require simple connectivity or completeness. We then reinterpret $\mu$-Kenmotsu manifolds in terms of metric connections with vectorial torsion and give a second proof of this characterization, combining the theory of complex spinorial forms with the theory of metric connections with torsion. Finally, we describe the global structure of exact $\mu$-Kenmotsu manifolds by means of Morse-Bott theory.\bigskip

\noindent
\emph{Keywords: spinorial forms, spin-c Killing spinors, Kenmotsu manifolds, connections with vectorial torsion}\medskip

\noindent
\emph{MSC2020: 53C25, 53C27}

\end{abstract}

\maketitle

\setcounter{tocdepth}{1} 

\tableofcontents


\section{Introduction}


Let $(M,g)$ be an oriented Riemannian manifold of odd dimension $d = 2n + 1$ and let $S$ be a bundle of irreducible complex spinors over the bundle of Clifford algebras of $(M,g)$. We use the positive convention for Clifford multiplication, that is, $X\cdot X=g(X,X)$ for every $X\in\Gamma(TM)$. We study spinors $\eta\in\Gamma(S)$ satisfying:
\begin{equation}
\label{eq:imag-killing-spinor}
\nabla^{g,A}_X \eta = \frac{\mu}{2} X\cdot \eta, \qquad X \in \Gamma(TM),
\end{equation}

\noindent
where $\nabla^{g,A}$ is the covariant derivative on $S$ determined by the Levi-Civita connection together with a $\U(1)$-connection $A$, and where $\mu \in C^{\infty}(M,\R)$. The solutions of Equation~\eqref{eq:imag-killing-spinor} are known as \emph{imaginary spin-c Killing spinors} with \emph{imaginary Killing function} $i\mu$. The omission of the factor $i$ in the equation above is a consequence of the positive convention, see Remark~\ref{remark:Convention}. Among these solutions we will focus on the \emph{pure} ones, that is, those satisfying:
\begin{equation*}
\dim_\C(\{X\in T_pM\otimes\C\mid X\cdot\eta_p=0\})=n, \qquad \forall p\in M.
\end{equation*}

\noindent
In odd dimensions, the purity assumption allows us to construct an almost-contact metric structure $(\varphi,\xi,\zeta)$ on $(M,g)$ from every solution of Equation \eqref{eq:imag-killing-spinor}, see Section~\ref{sec:onlyif} for more details. These structures were classified by D.~Chinea and C.~González in~\cite{CG1990}, see~\cite{ADDK2026} for a modern approach to this classification. Furthermore, Equation~\eqref{eq:imag-killing-spinor} will imply that:
\begin{equation}
\label{eq:Kenmotsu}
(\nabla^g_X\varphi)Y=\mu\big(g(\varphi(X),Y)\xi-\zeta(Y)\varphi(X)\big)
\end{equation}

\noindent
for all $X$, $Y \in \Gamma(TM)$. In general, an almost contact metric manifold satisfying Equation~\eqref{eq:Kenmotsu} is called a \emph{$\mu$-Kenmotsu manifold}, see the foundational paper of K.~Kenmotsu~\cite{K1972}. Such structures belong to the class $\mathcal{C}_5$ in the classification of~\cite{CG1990}. Finally, a $\mu$-Kenmotsu manifold is called \emph{exact} if the one-form $\mu\zeta$ is exact.\medskip

\noindent
Riemannian spin manifolds carrying imaginary Killing spinors were first investigated by H.~Baum in~\cite{B1989*,B1989} and, later, by H.-B.~Rademacher in~\cite{R1991}, who classified the complete ones. They proved that such manifolds are isometric to a warped product $(\R\times N, \dd t^2 + e^{2 \mu t}h)$, where $(N,h)$ admits parallel spinors. In the spin-c setting and under the completeness assumption, N.~Gro\ss e and R.~Nakad obtained in~\cite{GN2015} the analogous warped product construction, where $(N,h)$ carries parallel spin-c spinors. Very recently, S.~Lockman in~\cite{Lockman_2025} and the first named author in~\cite{C2026} have weakened the completeness hypothesis, requiring completeness only along the direction of the Dirac current of the spinor or along its orthogonal complement; we refer to those references for a precise formulation. In particular,~\cite[Thm.~4.16]{C2026} identified $\mu$-Kenmotsu structures as the odd-dimensional geometry arising under a partial completeness assumption and simple connectivity. The aim of the present article is to show that, under the purity assumption, both global assumptions can be removed.

\begin{thm}
\label{thm:maintheorem}
An odd-dimensional spin-c Riemannian manifold admits a pure spin-c Killing spinor with an imaginary Killing function $i\mu$, where $\mu \in C^\infty(M, \mathbb{R})$, if and only if it is a $\mu$-Kenmotsu manifold for which the one-form $\mu\zeta$ is exact.
\end{thm}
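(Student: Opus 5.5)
We prove the two implications separately. Throughout, a pure spinor in odd dimension $2n+1$ is encoded by its square, a complex spinorial form. The pair $(\eta,\eta)$ determines a real one-form $\zeta$ (the Dirac current, normalised to unit length) and a complex $n$-form $\Omega$ with $\zeta\wedge\Omega\neq 0$. Together with the Riemannian metric they fix an almost-contact metric structure $(\varphi,\xi,\zeta)$: $\xi$ is dual to $\zeta$, and the null space $\{X\cdot\eta=0\}$ is the $-i$ (or $+i$, depending on conventions) eigenspace of $\varphi$ on $\xi^\perp\otimes\C$. Conversely, an almost-contact metric structure on a spin-c manifold determines, up to a phase, a pure spinor of constant norm in the spin-c structure whose determinant line bundle is the anticanonical bundle of the horizontal distribution. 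This is the standard description of pure spinors, which I take from the earlier sections on complex spinorial forms.

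\emph{Only if.} Let $\eta$ be a pure solution of \eqref{eq:imag-killing-spinor}.
\begin{enumerate}
\item Differentiate $|\eta|^2$. Using the fact that $X\cdot$ is self-adjoint for the positive convention, we get $X(|\eta|^2)=\mu\langle X\cdot\eta,\eta\rangle$. The right-hand side is $\mu$ times the Dirac current, so $d|\eta|^2=\mu|\eta|^2\zeta$ after normalising. Hence $\mu\zeta=d\log|\eta|^2$ is exact. This is exactly the exactness claim, and it needs neither completeness nor simple connectivity.
\item Differentiate the spinorial form $\eta\otimes\eta^*$ using \eqref{eq:imag-killing-spinor}. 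The $\U(1)$-connection drops out of the Hermitian square. The result is an equation of the form
\[
\nabla^g_X(\eta\otimes\eta^*)=\tfrac{\mu}{2}\big(X\cdot(\eta\otimes\eta^*)+(\eta\otimes\eta^*)\cdot X\big).
\]
\item Read this equation degree by degree through the Kähler–Atiyah algebra. In degree one it gives $\nabla^g\zeta=\mu(g-\zeta\otimes\zeta)$ once we rescale to unit length. In degree two it gives $\nabla^g$ of the fundamental two-form $\Phi=g(\cdot,\varphi\cdot)$. Translating the latter into an equation for $\varphi$ yields \eqref{eq:Kenmotsu}.
\end{enumerate}

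\emph{If.} Let $(\varphi,\xi,\zeta)$ be $\mu$-Kenmotsu with $\mu\zeta=df$.
\begin{enumerate}
\item Take the canonical spin-c structure associated to the almost-contact structure, and its canonical pure spinor $\eta_0$ of unit norm.
\item Compute $\nabla^g\eta_0$. Since $\nabla^g\varphi$ is given by \eqref{eq:Kenmotsu} and, as a consequence, $\nabla^g\xi=\mu(X-\zeta(X)\xi)$, the intrinsic torsion is known explicitly. It lies in $\mathcal{C}_5$, so $\nabla^g_X\eta_0$ is $\frac{\mu}{2}(X-\zeta(X)\xi)\cdot\eta_0$ up to an imaginary multiple of $\eta_0$. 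That imaginary term is absorbed by choosing the $\U(1)$-connection $A$ as the connection induced on the determinant line bundle by the Hermitian connection of the horizontal distribution.
\item Set $\eta=e^{f/2}\eta_0$. The extra term $\frac12 df\,\eta=\frac{\mu}{2}\zeta(X)\eta$ should combine with $-\frac{\mu}{2}\zeta(X)\xi\cdot\eta_0$, using $\xi\cdot\eta_0=\pm\eta_0$ (the sign is fixed by orientation and purity). The two should cancel and yield \eqref{eq:imag-killing-spinor}, matching the relation $|\eta|^2=e^{f}$ from the forward direction.
\end{enumerate}

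The main obstacle is the degree-by-degree computation in the forward direction, particularly the two-form component that produces \eqref{eq:Kenmotsu}. I would handle it with the explicit formula for the square of a pure spinor, $\eta\otimes\eta^*\propto(1+\zeta)\wedge e^{-i\Phi}$ or an analogue, and compare the degree-one and degree-two parts of both sides. The sign and phase conventions in the converse, specifically $\xi\cdot\eta_0=\pm\eta_0$ and the choice of $A$, also need care, but are routine.
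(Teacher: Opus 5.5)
Your outline is sound, but the forward direction takes a different route from the paper.

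\textbf{What the paper does.} The paper never forms the Hermitian square. It works with the complex-bilinear square $\rho$, an $\cL$-valued Cartan $n$-form, for which the Killing equation is \emph{equivalent} to $\nabla^{g,A}_X\rho=i^n\mu*(X^\flat\wedge\rho)$ (Proposition~\ref{prop:KillingSpinorBilinear}). It reads off $\zeta$ from $*(\rho\wedge\overline{\rho})$ and $\omega$ from $\rho\triangle_{n-1}\overline{\rho}$, and differentiates these (Lemmas~\ref{lemma:nabla_zeta} and~\ref{lemma:nabla_omega}). For the converse it verifies the same form equation for $\rho=e^f\theta\otimes\mathfrak{l}$. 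So both directions stay on the form side and no spinor is ever manipulated.

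\textbf{What your route buys and costs.} Your $\eta\otimes\eta^*$ forgets the phase and the connection $A$. Hence $\nabla_XE=\frac{\mu}{2}(X\cdot E+E\cdot X)$ is only a consequence of the Killing equation, not equivalent to it. That is why your converse has to go back to spinors. What you do there (canonical spinor of the $\mathrm{U}(n)$-reduction, $\mathcal{C}_5$ intrinsic torsion, rescaling by $e^{f/2}$) is the spinor-level counterpart of the paper's Proof~II, where $\rho_0$ is $\hat\nabla^A$-parallel and $\rho=e^f\rho_0$.

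In exchange, your forward direction is easier than you fear. It needs neither the explicit square $(1+\zeta)\wedge e^{-i\Phi}$ nor any truncation bookkeeping. Set $\Omega_k(Y_1,\dots,Y_k):=\scS((Y_1\wedge\dots\wedge Y_k)\cdot\eta,\eta)$. Self-adjointness of $X\cdot$ then gives directly $\dd|\eta|^2=\mu V_\eta^\flat$, $\nabla^g_XV_\eta=\mu|\eta|^2X$ and $\nabla^g_X\Omega_2=\mu\,\iota_X\Omega_3$. The only algebraic inputs are:
\begin{itemize}
\item $\xi\cdot\eta=\eta$ for the suitably oriented unit $\xi\perp\mathrm{Ann}(\eta)\oplus\overline{\mathrm{Ann}(\eta)}$. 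This yields $V_\eta=|\eta|^2\xi$, which is what your ``after normalising'' silently uses (it is Lemma~\ref{lemma:pure-implies-type-I}). It also yields $\Omega_3=\zeta\wedge\Omega_2$.
\item Purity, to identify $\Omega_2=\pm i|\eta|^2\omega$.
\end{itemize}
Then $\nabla^g_X\omega=\mu\,\iota_X\omega\wedge\zeta$ follows in one line. This also covers $n=1$, where the truncated K\"ahler--Atiyah algebra has no degree-two part.

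\textbf{The sign in your converse.} This step is not an automatic cancellation. Take $A$ induced by $\hat\nabla$, so that $\hat\nabla^A\eta_0=0$, and write $\xi\cdot\eta_0=\varepsilon\eta_0$. Lifting the difference tensor of the connection in Equation~\eqref{eq:vectorial-connection} gives $\nabla^{g,A}_X\eta_0=\frac{\varepsilon\mu}{2}(X-\zeta(X)\xi)\cdot\eta_0$. Hence $\nabla^{g,A}_X(e^{f/2}\eta_0)=\frac{\varepsilon\mu}{2}X\cdot(e^{f/2}\eta_0)$ for either sign. With $\varepsilon=-1$ you obtain Killing function $-i\mu$, and rescaling $\eta_0$ cannot change that. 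Purity does not fix $\varepsilon$. You must arrange $\varepsilon=+1$ by choosing the irreducible representation $\gamma_\ell$ relative to the orientation induced by $(\varphi,\zeta)$. This is exactly what the paper's fixed $\ell$ together with its choice of volume form $\nu$ accomplishes.
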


\noindent
The proof of Theorem~\ref{thm:maintheorem} relies on the correspondence between irreducible complex spinors and complex spinorial forms developed in~\cite{Algebraic_Complex_Square_2025,Complex_Differential_Spinors_2026}. This correspondence allows us to translate algebraic and differential equations for the spinor into equivalent equations for an associated differential form on the underlying Riemannian manifold. The analogous problem for real spin-c Killing spinors, whose odd-dimensional geometry is Sasakian, has been studied by the second and third named authors in~\cite{Sasakian_spin-c_2026} using the same approach. More broadly, the results of \cite{Sasakian_spin-c_2026} and this paper suggest that the study of Killing spinors may be organized according to their \emph{nullity}, that is, the dimension of their isotropic annihilator, being the pure case considered here the case of maximal dimension, see \cite{Trautman1994}. Therefore, our results motivate the study of the geometric structures determined by generalized Killing spinors of intermediate nullity, as well as of the behavior and stratification of the nullity function on the underlying manifold.\medskip

\noindent
Following the program promoted by I.~Agricola in~\cite{A2006}, which studies non-integrable geometries via metric connections with torsion, we reinterpret Theorem~\ref{thm:maintheorem} from this perspective. This program is based on the principle that a non-integrable geometry, such as a $\mu$-Kenmotsu, is often best described by a metric connection that preserves the structure and has torsion as simple as possible. For $\mu$-Kenmotsu manifolds the relevant connection is a connection with vectorial torsion, see Lemma~\ref{lemma:vectorial-Kenmotsu}. This class of connections was previously studied by I.~Agricola and M.~Kraus in \cite{AK2016}. In Section~\ref{sec:metric-connection-interpretation} we combine the algebraic identities obtained in Sections~\ref{sec:onlyif} and~\ref{sec:if} with this connection viewpoint. We obtain a second proof which simplifies the differential computations of the first proof. \medskip

\noindent
This article is organized as follows: Section~\ref{sec:purespinors} introduces pure irreducible complex spinors and adapts the formalism in~\cite{Algebraic_Complex_Square_2025,Complex_Differential_Spinors_2026} to this particular case. Specifically, Proposition~\ref{prop:KillingSpinorBilinear} translates the spin-c Killing Equation~\eqref{eq:imag-killing-spinor} into an equivalent covariant equation on a complex $n$-form with values in a complex line bundle. In Sections~\ref{sec:onlyif} and~\ref{sec:if}, we prove, respectively, the \emph{only if} and the \emph{if} directions of Theorem~1.1. Furthermore, Example~\ref{ex:non-pure-example} shows that the purity assumption cannot be dropped. Section~\ref{sec:metric-connection-interpretation} develops the metric connection interpretation of $\mu$-Kenmotsu manifolds and contains a second proof of Theorem~\ref{thm:maintheorem}. Finally, in Section~\ref{sec:global}, we describe the global structure of exact $\mu$-Kenmotsu manifolds by means of Morse-Bott theory. We obtain a decomposition into expanding and contracting Kähler cylinders glued along totally geodesic leaves.

\begin{ack}
The work of JLCJ has been supported by the PNRR-III-C9-2023-I8 grant CF 149/31.07.2023 {\em Conformal Aspects of Geometry and Dynamics} and by the project PID2024-156578NB-I00 (Spain). The work of AGG is supported by the Scuola Internazionale Superiore di Studi Avanzati (SISSA). The work of CSS was partially supported by the research grant PID2023-152822NB-I00 of the Ministry of Science of the government of Spain.
\end{ack}


\section{Pure spinors in odd dimensions}
\label{sec:purespinors}


Let $(M,g)$ be an oriented Riemannian manifold of odd dimension $d = 2n + 1$ and let $S$ be a bundle of irreducible complex spinors over the bundle of Clifford algebras of $(M,g)$. The existence of such a bundle of irreducible complex spinors is equivalent to the existence of a $\mathrm{Spin}^c(2n+1)$-structure $Q$ on $(M,g)$, to which $S$ is associated via the standard associated vector bundle construction~\cite{LS18}:
\begin{equation*}
S = Q\times_{\mathfrak{r}_{\ell}} \mathbb{C}^{2^n},
\end{equation*}

\noindent
where $\mathfrak{r}_{\ell}\colon \mathrm{Spin}^c(2n+1) \to \mathrm{GL}(2^n,\C)$ denotes the restriction to the spin-c group $\mathrm{Spin}^c(2n+1) \subset \mathbb{C}\mathrm{l}(2n+1)$ of one of the two inequivalent irreducible representations $\gamma_{\ell}\colon\mathbb{C}\mathrm{l}(2n+1)\to\mathrm{End}(\smash{\mathbb{C}^{2^n}})$ of $\mathbb{C}\mathrm{l}(2n+1)$. These representations are distinguished by the sign $\ell\in\{\pm1\}$ that $\gamma_{\ell}$ takes on the complex volume form. We refer the reader to~\cite[Sec.~2.4]{Friedrich2000} for a detailed exposition of spin-c structures on Riemannian manifolds. In particular, $S$ has complex rank $2^n$ and admits a non-degenerate admissible complex-bilinear pairing $\scB$~\cite{AC97,ACDVP05}. This can be understood as a morphism of complex vector bundles:
\begin{equation*}
\scB \colon S\otimes S \to \mathcal{L},
\end{equation*}

\noindent
where $\mathcal{L}$ is the characteristic Hermitian complex line bundle on $M$ naturally associated with the given $\mathrm{Spin}^c(2n+1)$-structure $Q$. We will denote the corresponding principal $\U(1)$-bundle by $P\to M$. Since $S$ is associated to a spin-c structure $Q$, the combination of the lift of the Levi-Civita connection $\nabla^g$ of $(M,g)$ to $S$ together with a choice of a connection $A\in\Omega^1(P,i\R)$ on the characteristic $\U(1)$-bundle $P$ defines a covariant derivative $\nabla^{g,A}$ on $S$. For ease of notation, in the following we will denote Clifford multiplication simply by a \emph{dot}.

\begin{definition}[{\cite[Chap.~IV, Sec.~9]{Spin89}}]
A nowhere vanishing section $\eta\in\Gamma(S)$ of $S$ is \emph{pure} if $\eta$ is pure at every point of $M$, that is, if at every point $p\in M$ the following subspace:
\begin{equation*}
\mathrm{Ann}(\eta_p):=\{X\in T_pM\otimes\C\mid X\cdot\eta_p=0\},
\end{equation*}

\noindent
which is automatically isotropic with respect to the complex-bilinear extension of $g$, is of maximal dimension, that is, $\dim_\C\mathrm{Ann}(\eta_p)=n$ for every $p\in M$.
\end{definition}

\begin{definition} 
A \emph{pure spin-c Killing spinor} on $(M,g)$ with \emph{imaginary Killing function} $i\mu$, where $\mu \in C^{\infty}(M,\mathbb{R})$, is a pure section $\eta\in \Gamma(S)$ of a bundle of irreducible complex spinors $S = Q \times_{\mathfrak{r}_{\ell}} \C^{2^n}$ on $(M,g)$ satisfying the following equation:
\begin{equation}
\label{eq:KillingSpinor}
\nabla^{g,A}_X \eta = \frac{\mu}{2} X\cdot \eta, \qquad X \in \Gamma(TM)
\end{equation}

\noindent
for a connection $A\in\Omega^1(P,i\R)$ on the characteristic $\U(1)$-bundle $P$ of $Q$.
\end{definition}

\begin{remark}
\label{remark:Convention}
Note that there is a missing factor of $i$ in the definition of an imaginary spin-c Killing spinor with respect to the standard definition. Given that most of the mathematical literature in spin geometry uses the \emph{minus convention} to define the Clifford algebra, the removal of the factor $i$ ensures that our geometric notion of an imaginary spin-c Killing spinor matches that appearing in the literature.
\end{remark}

\begin{remark}
If the connection $A$ has trivial holonomy, the principal $\mathrm{U}(1)$-bundle $P$ admits a global parallel section. In this gauge, $A$ globally vanishes, the spin-c structure strictly reduces to a classical spin structure, and the covariant derivative $\nabla^{g,A}$ descends to the standard spin Levi-Civita connection $\nabla^g$. Consequently, the spin-c Killing Equation~\eqref{eq:KillingSpinor} globally reduces to the standard imaginary Killing spinor equation. Interestingly enough, if $A$ is flat but has non-trivial holonomy, which requires $M$ to be non-simply connected, then Equation~\eqref{eq:KillingSpinor} reduces locally to the standard imaginary Killing spinor equation, but not globally. In this flat but non-trivial holonomy case, a spin-c Killing spinor can be intuitively understood as a standard imaginary Killing spinor \emph{twisted} by $\mathrm{U}(1)$ monodromies.
\end{remark}

Associated with a given complex spinor $\eta\in \Gamma(S)$ on $(M,g)$, we can construct its complex-bilinear square $\rho$, obtained via the choice of the admissible complex-bilinear pairing $\scB$ on $S$. The general algebraic characterization of this complex-bilinear square was obtained in~\cite[Thm.\ 4.12]{Algebraic_Complex_Square_2025} in terms of an explicit algebraic system of equations in the underlying truncated Kähler-Atiyah bundle. Focusing on the case of a pure irreducible complex spinor, its complex-bilinear square is algebraically well-known since Cartan~\cite{Cartan1938}, see also~\cite{Kopczynski1997}. Globalizing this algebraic characterization to a Riemannian manifold equipped with a spin-c structure, see~\cite{Complex_Differential_Spinors_2026}, we obtain the following result.

\begin{lemma}
\label{lemma:squaresriemannian2n1}
Let $(M,g)$ be a $(2n+1)$-dimensional oriented Riemannian manifold, and let $S$ be a bundle of irreducible complex spinors associated to a given $\mathrm{Spin}^c(2n+1)$-structure $Q$ with characteristic line bundle $\mathcal{L}$. An $\mathcal{L}$-valued complex exterior form $\rho \in \Omega_{\mathbb{C}}(M,\mathcal{L})$ is the complex-bilinear square of a pure irreducible complex spinor $\eta\in\Gamma(S)$ if and only if it is a maximally decomposable $\mathcal{L}$-valued complex $n$-form whose components are isotropic and mutually orthogonal. That is, locally around every point in $M$, there exists a local section $\mathfrak{l}$ of $\mathcal{L}$ such that:
\begin{equation*}
\rho = (\theta^1\wedge \cdots \wedge \theta^n) \otimes \mathfrak{l}
\end{equation*}

\noindent
for a set of local isotropic and mutually orthogonal complex one-forms $\theta^1,\ldots,\theta^n \in \Omega^1_{\mathbb{C}}(M)$.
\end{lemma}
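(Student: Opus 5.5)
The plan is to reduce the statement to pointwise linear algebra on a single fibre and then globalize using the $\mathrm{Spin}^c$-equivariance of the squaring map. At a point $p$ we trivialize $\mathcal{L}_p$ by a unit vector $\mathfrak{l}_p$. The complex-bilinear square then becomes the element $\rho_p\in\Lambda(T_p^*M\otimes\C)$ attached to $\eta_p\otimes\eta_p$ through the isomorphism $\mathrm{End}(\C^{2^n})\simeq\mathbb{C}\mathrm{l}(2n+1)^{\ell}\simeq\Lambda^{\leq n}(\C^{2n+1})$. Concretely,
\[
\rho_p=\sum_k \frac{1}{k!}\,\scB(e_{i_1}\cdots e_{i_k}\cdot\eta_p,\eta_p)\,e^{i_1}\wedge\cdots\wedge e^{i_k}.
\]

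For the \emph{only if} direction we choose an orthonormal frame adapted to the pure spinor. Purity gives an $n$-dimensional isotropic $\mathrm{Ann}(\eta_p)=\mathrm{span}\{\theta_1,\dots,\theta_n\}$. Its complex conjugate is also isotropic, and together with a unit real vector $\xi$ orthogonal to both we get the splitting $T_pM\otimes\C=\mathrm{Ann}\oplus\overline{\mathrm{Ann}}\oplus\C\xi$. In this frame $\eta_p$ is, up to scale, the Fock vacuum, and we compute the coefficients $\scB(\gamma\cdot\eta_p,\eta_p)$ directly.

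\begin{itemize}
\item A monomial containing some $\theta_j$ on the right kills $\eta_p$.
\item For a monomial containing some $\bar\theta_j$ but not the matching $\theta_j$, we move $\bar\theta_j$ to the left using admissibility of $\scB$, which makes Clifford multiplication (anti-)symmetric. It then acts on $\eta_p$ and gives zero.
\item Monomials in $\xi$ only contribute through the volume form relation. Since $\xi$ acts on the vacuum by a scalar determined by $\ell$, these contributions can be absorbed into the top-degree term.
\end{itemize}

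The outcome is that only the degree-$n$ component survives, and it equals a nonzero multiple of $\theta^1\wedge\cdots\wedge\theta^n$. Here the $\theta^j=g(\theta_j,\cdot)$ are isotropic and mutually orthogonal. The degree-$n$ count and the truncation to $\Lambda^{\leq n}$ are consistent with the fact that $\scB$ has the correct symmetry and type only in degree $n$, which is the standard Cartan statement. I would cite Cartan~\cite{Cartan1938} and~\cite{Kopczynski1997} for this pointwise computation rather than redo it in full.

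For the converse, let $\rho=(\theta^1\wedge\cdots\wedge\theta^n)\otimes\mathfrak{l}$ with the $\theta^j$ isotropic and mutually orthogonal. Their span $W$ is then a maximal isotropic subspace. Pure spinors annihilated by $W$ form a complex line, and by the forward computation any nonzero such spinor $\eta_0$ has square $c\,(\theta^1\wedge\cdots\wedge\theta^n)\otimes\mathfrak{l}'$. Rescaling $\eta_0$ by a square root of the ratio of the two $\mathcal{L}$-valued coefficients gives exactly $\rho$. This determines $\eta_p$ up to sign.

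The globalization is the main obstacle. The sign ambiguity must be resolved smoothly, and we must make sure the local section $\mathfrak{l}$ is compatible with the $\mathcal{L}$-twist of the pairing $\scB\colon S\otimes S\to\mathcal{L}$. I would first work on contractible open sets. There the forms $\theta^j$ and $\mathfrak{l}$ can be taken smooth, the annihilator line bundle is trivial, and choosing a smooth square root yields a local smooth $\eta$. On overlaps the two local spinors differ by a sign, which is locally constant. Here I would rely on the global framework of~\cite{Complex_Differential_Spinors_2026}, whose general correspondence between spinors and their squares already handles the sign, so that a global $\eta$ exists exactly when the form-level conditions hold. That the construction does not depend on the chosen frame follows from the equivariance of the squaring map under $\mathrm{Spin}^c(2n+1)$, with the $\mathrm{U}(1)$ factor acting on $\mathcal{L}$ by the square character.
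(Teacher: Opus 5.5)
Your overall strategy matches the paper's, which gives no argument beyond citing Cartan's pointwise description of the square of a pure spinor~\cite{Cartan1938,Kopczynski1997} and the globalization in~\cite{Complex_Differential_Spinors_2026}. However, both places where you add content beyond those citations contain a step that fails.

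\emph{Pointwise.} Your second bullet is false. With $\mathrm{Ann}(\eta_p)=W=\mathrm{span}\{\theta_1,\dots,\theta_n\}$ one has $\bar\theta_j\cdot\eta_p\neq0$, because $W\cap\overline{W}=0$. So moving $\bar\theta_j$ across $\scB$ gives $\pm\scB(\gamma'\cdot\eta_p,\bar\theta_j\cdot\eta_p)$, not $0$.

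The bullet would even kill the term you want. In the frame dual to $\{\theta^j,\bar\theta^j,\zeta\}$, the vector dual to $\theta^j$ is $\bar\theta_j/g(\theta_j,\bar\theta_j)$. Hence the coefficient of $\theta^1\wedge\cdots\wedge\theta^n$ is a nonzero multiple of $\scB(\bar\theta_1\cdots\bar\theta_n\cdot\eta_p,\eta_p)$, a monomial with only unmatched $\bar\theta$'s. Conversely, the balanced monomials your bullets leave alive (matched pairs $\theta_j\bar\theta_j$, and $\xi$) are exactly the ones that must vanish.

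What is missing is a grading argument:
\begin{itemize}
\item Choose the $\theta_j$ with $g(\theta_j,\bar\theta_k)=0$ for $j\neq k$.
\item For $C\subsetneq\{1,\dots,n\}$ and $m\notin C$, use $\theta_m\bar\theta_m\cdot\eta_p=2g(\theta_m,\bar\theta_m)\eta_p$. Move $\theta_m$ across $\scB$ by admissibility and anticommute it past $\bar\theta_C$ onto $\eta_p$. This gives $\scB(\bar\theta_C\cdot\eta_p,\eta_p)=0$.
\item Since the $\bar\theta_C\cdot\eta_p$ form a basis of $S_p$, nondegeneracy of $\scB$ then forces $\scB(\bar\theta_1\cdots\bar\theta_n\cdot\eta_p,\eta_p)\neq0$.
\end{itemize}
Together with $\theta_j\cdot\eta_p=0$ and $\xi\cdot\eta_p=\pm\eta_p$, every coefficient of $\rho_p$ is either $0$ or a multiple of some $\scB(\bar\theta_C\cdot\eta_p,\eta_p)$. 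The only component of degree $\le n$ with $|C|=n$ is $\theta^1\wedge\cdots\wedge\theta^n$.

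In particular, the $\zeta$-components are not ``absorbed'' into the top term; they vanish. Their would-be partner $\zeta\wedge\theta^1\wedge\cdots\wedge\theta^n$ lies in degree $n+1$, outside the truncation. Since you fall back on the citation, your conclusion survives, but the mechanism you describe would not produce it.

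\emph{Global.} You correctly note that local square roots $\pm\eta_U$ differ on overlaps by locally constant signs. But this $\mathbb{Z}_2$-cocycle, namely the class in $H^1(M;\mathbb{Z}_2)$ of the double cover $\{\pm\eta_p\}$, is a genuine obstruction, and no citation removes it.

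Here is a counterexample. Let $\eta$ be a global pure spinor on $M=S^1\times N$ with square $\rho_0$. Then $e^{it}\rho_0$ is still a Cartan $n$-form. Any spinor with this square has the same annihilator as $\eta$, hence equals $c\,\eta$ with $c^2=e^{it}$, and this has no continuous solution.

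So, for fixed $S$, the \emph{if} direction holds locally. Globally it holds exactly when this class vanishes. In general it holds only after replacing $S$ by $S\otimes_{\mathbb{R}}\lambda$, where $\lambda$ is the real line bundle of the cocycle; this leaves $\mathcal{L}$ unchanged, since $\lambda\otimes\lambda$ is canonically trivial. Your assertion that a global $\eta$ exists exactly when the form-level conditions hold must be weakened accordingly; the lemma's phrasing is also silent on this point. The \emph{only if} direction is unaffected.
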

 
Based on the previous lemma, we introduce the following definition. 
 
\begin{definition}
Let $(M,g)$ be a Riemannian manifold of dimension $d = 2n+1$ equipped with a Hermitian complex line bundle $\cL$. A \emph{Cartan $n$-form} on $(M,g,\cL)$ is a maximally decomposable $\mathcal{L}$-valued complex $n$-form whose components are isotropic and mutually orthogonal, as explained in Lemma~\ref{lemma:squaresriemannian2n1}.
\end{definition}

Note that the spinor $\eta\in\Gamma(S)$ vanishes at a point $p\in M$ if and only if its complex-bilinear square $\rho$ also vanishes at $p$. By mapping the Killing spinor Equation~\eqref{eq:KillingSpinor} through the dequantization isomorphism $(\Psi_\ell^<)^{-1} \colon \mathrm{End}(S) \to \wedge^< T^*_\mathbb{C} M$ onto the truncated Kähler-Atiyah algebra $(\wedge^< T^*_{\mathbb{C}}M, \vee)$ of $(M,g)$, see~\cite{Algebraic_Complex_Square_2025,Complex_Differential_Spinors_2026}, we translate the spinorial framework into an equivalent exterior system for $\rho$.\medskip

For later convenience, we will fix from now on $\ell=(-1)^{\binom{n-1}{2}+1}\in\{\pm1\}$.

\begin{prop}
\label{prop:KillingSpinorBilinear}
A $(2n+1)$-dimensional oriented and spin-c Riemannian manifold $(M,g)$ admits a pure spin-c Killing spinor with imaginary Killing function $i\mu$, where $\mu \in C^\infty(M, \mathbb{R})$, if and only if it admits a Hermitian complex line bundle $\cL$ and a Cartan $n$-form $\rho \in \Omega^n_{\mathbb{C}}(M, \mathcal{L})$ satisfying the differential equation:
\begin{equation}
\label{eq:imaginary-killing-rho}
\nabla^{g,A}_X \rho = i^n \mu \ast (X^\flat \wedge \rho)
\end{equation}

\noindent
for every vector field $X \in \Gamma(TM)$.
\end{prop}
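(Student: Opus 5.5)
The plan is to go through the spinor–form correspondence of \cite{Algebraic_Complex_Square_2025,Complex_Differential_Spinors_2026}. That correspondence is equivariant and compatible with connections. Concretely, the map $\eta\mapsto\rho=\eta\otimes\eta$, dequantized through $(\Psi^<_\ell)^{-1}$ using the admissible pairing $\scB$, intertwines $\nabla^{g,A}$ on $S$ with the induced connection on $\wedge^< T^*_\C M\otimes\cL$. The point is that the pairing $\scB\colon S\otimes S\to\cL$ is parallel for $\nabla^{g,A}$ together with the induced connection on $\cL$.

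For the forward direction, let $\eta$ be a pure spin-c Killing spinor. By Lemma~\ref{lemma:squaresriemannian2n1}, its square $\rho$ is a Cartan $n$-form with values in $\cL$. Differentiating $\eta\otimes\eta$ and using the Leibniz rule, the Killing equation gives
\begin{equation*}
\nabla^{g,A}_X(\eta\otimes\eta)=\tfrac{\mu}{2}\big(X\cdot\eta\otimes\eta+\eta\otimes X\cdot\eta\big).
\end{equation*}
In the Kähler–Atiyah algebra this becomes $\tfrac{\mu}{2}\big(X^\flat\vee\rho+\epsilon\,\rho\vee\pi(X^\flat)\big)$, where the sign $\epsilon$ comes from the symmetry type of $\scB$ and $\pi$ is the parity automorphism. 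The expansions $X^\flat\vee\rho=X^\flat\wedge\rho+\iota_X\rho$ and $\rho\vee X^\flat=(-1)^n(X^\flat\wedge\rho-\iota_X\rho)$ then let one of the two terms cancel. With the chosen $\ell=(-1)^{\binom{n-1}{2}+1}$, I expect the surviving term to be of degree $n+1$, namely $\mu\,X^\flat\wedge\rho$. We are working in the truncated algebra $\wedge^<$, which retains degrees $\le n$. So this $(n+1)$-form must be represented through Hodge duality, using the fact that $\gamma_\ell$ sends the volume form to $\ell$ (times a power of $i$). This is exactly what produces $i^n\mu\ast(X^\flat\wedge\rho)$. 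Fixing these signs and powers of $i$ is the main bookkeeping step, and I would check it against a flat model in low dimension ($n=1$) before trusting the general formula.

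For the converse, take a Cartan $n$-form $\rho$ satisfying \eqref{eq:imaginary-killing-rho}. Lemma~\ref{lemma:squaresriemannian2n1} gives, locally, a pure spinor $\eta$ with square $\rho$, determined up to sign. If a global square root fails to exist, one adjusts the spin-c structure: the characteristic bundle should be $\cL$, and a different choice of $Q$ with the same $\cL$ would absorb the $\pm$ ambiguity. I would handle this by saying the statement asserts existence on $M$ with some spin-c structure. The remaining work is to recover the spinor equation from the form equation. Pure spinors form a single orbit, so for each $X$ one can write $\nabla^{g,A}_X\eta-\tfrac{\mu}{2}X\cdot\eta=\beta(X)\cdot\eta$ for some Clifford element $\beta(X)$. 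Running the computation above backwards gives $\beta(X)\cdot\eta\otimes\eta+\eta\otimes\beta(X)\cdot\eta=0$. This says that $\beta(X)$ lies in the stabilizer algebra of the line $[\rho]$ modulo the annihilator, so $\beta(X)\cdot\eta=c(X)\eta$ with $c(X)$ purely imaginary and $2c(X)\rho=0$. Since $\rho\neq0$, we get $c=0$.

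The main obstacle is this last inversion, showing that the derivative of $\eta$ is entirely determined by the derivative of its square. If the direct argument is troublesome, I would fall back on the general differential characterization theorem of \cite{Complex_Differential_Spinors_2026}. That theorem states that $\nabla\eta=\mathcal{A}\cdot\eta$ is equivalent to $\nabla\rho=\mathcal{A}\vee\rho\pm\rho\vee\mathcal{A}^t$, and applying it with $\mathcal{A}(X)=\tfrac{\mu}{2}X$ gives both directions at once.
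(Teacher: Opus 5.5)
Your plan follows the paper's route, with the converse made more explicit. The paper invokes \cite[Thm.~5.10]{Complex_Differential_Spinors_2026}, which is your fallback, to get $\nabla^{g,A}_X\rho=\frac{\mu}{2}(X^\flat\vee\rho+s\,\rho\vee X^\flat)$, and then only evaluates the right-hand side. Your Leibniz computation is the forward half of that theorem. Your inversion argument is a valid self-contained substitute for the backward half, and it is simpler than you make it. If $\psi\otimes\eta+\eta\otimes\psi=0$ with $\eta\neq0$, pairing the second factor with a functional equal to $1$ on $\eta$ gives $\psi\in\C\eta$. Then $2c\,\eta\otimes\eta=0$ forces $\psi=0$. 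No stabilizer algebra is needed, and "purely imaginary" plays no role: that phase blindness belongs to Hermitian squares, and complex-bilinearity of $\scB$ removes it. Your observation that a non-trivial $\Z_2$-bundle of square roots can be absorbed by twisting $Q$ without changing $\cL$ is a point the paper leaves implicit in Lemma~\ref{lemma:squaresriemannian2n1}.

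The bookkeeping you deferred is the one computation the paper actually performs, and your sign needs correcting. The relevant sign is the adjoint type $s$ of $\scB$, not its symmetry type. From $\scB(X\cdot\chi,\eta)=s\,\scB(\chi,X\cdot\eta)$, the term $\eta\otimes X\cdot\eta$ dequantizes to $s\,\rho\vee X^\flat$; this is your $\epsilon\,\rho\vee\pi(X^\flat)$ with $\epsilon=-s$. The paper's pairing has $s=(-1)^n$, which is exactly what makes the $\iota_X\rho$ terms cancel. Using $X^\flat\diamond\rho=X^\flat\wedge\rho+\iota_X\rho$ and $\rho\diamond X^\flat=(-1)^n(X^\flat\wedge\rho-\iota_X\rho)$ in the truncated product gives
\begin{equation*}
X^\flat\vee\rho=\iota_X\rho+c_n*(X^\flat\wedge\rho),\qquad \rho\vee X^\flat=(-1)^n\big(-\iota_X\rho+c_n*(X^\flat\wedge\rho)\big),\qquad c_n:=i^n\ell(-1)^{\binom{n+1}{2}}.
\end{equation*}
Hence $X^\flat\vee\rho+s\,\rho\vee X^\flat=2c_n*(X^\flat\wedge\rho)$. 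Since $\binom{n+1}{2}-\binom{n-1}{2}=2n-1$, the choice $\ell=(-1)^{\binom{n-1}{2}+1}$ equals $(-1)^{\binom{n+1}{2}}$. So $c_n=i^n$, and you get exactly $\nabla^{g,A}_X\rho=i^n\mu*(X^\flat\wedge\rho)$. This is the uniform version of the paper's case split into $2\ell$ for $n$ even and $-2i\ell$ for $n$ odd.
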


\begin{proof}
Let $(M,g)$ be a $(2n+1)$-dimensional oriented Riemannian manifold, equipped with a bundle of irreducible complex spinors $(S,\gamma_\ell)$. By~\cite{Algebraic_Complex_Square_2025}, we can equip $(S,\gamma_\ell)$ with an admissible complex-bilinear pairing $\scB$ of positive adjoint type if $n$ is even, and negative adjoint type if $n$ is odd. As explained in~\cite{LBC13,LBC16}, and further elaborated in~\cite{Algebraic_Complex_Square_2025}, the truncated Kähler-Atiyah bundle is defined on:
\begin{equation*}
\wedge^<T^*_\C M=\bigoplus_{k=0}^n\wedge^kT^*_\C M
\end{equation*}

\noindent
endowed with the product:
\begin{equation*}
\rho_1\vee\rho_2=\mathcal{P}_<(\rho_1\diamond\rho_2+i^n\ell*\tau(\rho_1\diamond\rho_2))
\end{equation*}

\noindent
for all $\rho_1,\rho_2\in\wedge^<T^*_\C M$ and where we have fixed $\ell=(-1)^{\binom{n-1}{2}+1}\in\{\pm1\}$. Here $\mathcal{P}_<\colon\wedge T^*_\C M\to\wedge^<T^*_\C M$ is the natural projection, $\diamond$ denotes the geometric product, and $\tau$ is the reversion anti-automorphism that acts as multiplication by $\smash{(-1)^{\binom{k}{2}}}$ on $k$-forms. By~\cite[Thm.\ 5.10]{Complex_Differential_Spinors_2026}, the equation $\nabla^{g,A}_X \eta = \frac{\mu}{2} X\cdot \eta$ is equivalent to:
\begin{equation*}
\nabla^{g,A}_X\rho=\frac{\mu}{2}(X^\flat\vee\rho+s\rho\vee X^\flat),
\end{equation*}

\noindent
where $s\in\{\pm1\}$ is the adjoint type of $\scB$. We have: \begin{itemize}
    \item If $n$ is even, then $s=1$ and we get $X^\flat\vee\rho+\rho\vee X^\flat=2\ell*(X^\flat\wedge\rho)$.
    \item If $n$ is odd, then $s=-1$ and we get $X^\flat\vee\rho-\rho\vee X^\flat=-2i\ell*(X^\flat\wedge\rho)$.
\end{itemize}

A direct verification shows that in both cases, substituting $\ell=(-1)^{\binom{n-1}{2}+1}$, we obtain the result from the statement.
\end{proof}


\section{The \emph{only if} direction}
\label{sec:onlyif}


Assume that the $(2n+1)$-dimensional oriented Riemannian manifold $(M,g)$ admits a pure spin-c Killing spinor $\eta \in \Gamma(S)$ with imaginary Killing function $i\mu$, where $\mu \in C^{\infty}(M,\mathbb{R})$. Let $\rho \in \Omega^n_{\mathbb{C}}(M, \mathcal{L})$ be its corresponding complex-bilinear square. We will construct a canonical $\mu$-Kenmotsu structure $(\varphi,\xi,\zeta)$ on $(M,g)$ directly from $\rho$ via certain natural contractions and exterior products.\medskip

First, we construct the almost contact one-form, whose dual we will identify with the Reeb vector field of the $\mu$-Kenmotsu structure. Taking the exterior product of $\rho$ with its complex conjugate $\overline{\rho}$, the line bundle factors canonically trivialize under the canonical pairing $\mathcal{L} \otimes \overline{{\mathcal{L}}}\cong\mathcal{L}\otimes\mathcal{L}^\ast\cong\mathbb{C}$, yielding a globally well-defined real $2n$-form $i^n\rho \wedge \overline{\rho} \in \Omega^{2n}(M)$. Applying the Hodge star operator, we define the one-form $\zeta \in \Omega^1(M)$ by:
\begin{equation*}
\zeta := \frac{i^n*(\rho\wedge\overline{\rho})}{\escal{\rho,\overline{\rho}}},
\end{equation*}

\noindent
where $\langle \rho, \overline{\rho} \rangle \in C^{\infty}(M,\R)$ is the inner product defined by $g$. The dual $\xi := \zeta^\sharp$ of $\zeta$ is a nowhere vanishing vector field on $M$ of unit length, since $\rho$ is nowhere vanishing and:
\begin{equation*}
\escal{*(\rho\wedge\overline{\rho}),*(\rho\wedge\overline{\rho})}=(-1)^n\escal{\rho,\overline{\rho}}^2.
\end{equation*}

\noindent
Next, we construct the fundamental two-form $\omega \in \Omega^2(M)$ out of $\rho$. Given $\rho\in \Omega^n_{\mathbb{C}}(M,\cL)$, define:
\begin{equation*}
\rho \triangle_{n-1} \overline{\rho} := \frac{1}{(n-1)!} \sum_{i_1, \dots, i_{n-1} = 1} ^d \rho(e_{i_1},\hdots , e_{i_{n-1}}) \wedge \overline{\rho}(e_{i_1},\hdots , e_{i_{n-1}})\in \Omega^2_{\mathbb{C}}(M)
\end{equation*}

\noindent
in terms of any orthonormal frame $\{e_1,\hdots , e_{d}\}$ of $(M,g)$. We set:
\begin{equation*}
i\omega := \frac{\rho\triangle_{n-1}\overline{\rho}}{\escal{\rho,\overline{\rho}}}.
\end{equation*}

\noindent
Hence, $\omega \in \Omega^2(M)$ is a real two-form on $M$. We define the fundamental $(1,1)$-tensor field $\varphi\in\Gamma(\mathrm{End}(TM))$ directly from the fundamental two-form $\omega$ by:
\begin{equation*}
\varphi(X) := (\iota_X\omega)^\sharp,
\end{equation*}

\noindent
which automatically satisfies $g(\varphi(X),Y)=\omega(X,Y)$ for all $X$, $Y\in\Gamma(TM)$ and, since $\omega$ is a two-form, $g(\varphi(X),Y)=-g(X,\varphi(Y))$.

\begin{lemma}\label{lemma:varphi_square}
The fundamental endomorphism $\varphi\in\Gamma(\mathrm{End}(TM))$ satisfies $\varphi^2=-\Id+\zeta\otimes\xi$.
\end{lemma}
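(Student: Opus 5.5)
This is a pointwise algebraic identity, so I would prove it at a fixed point $p\in M$ using the local normal form from Lemma~\ref{lemma:squaresriemannian2n1}. Write $\rho_p=(\theta^1\wedge\cdots\wedge\theta^n)\otimes\mathfrak{l}$ with $\theta^1,\ldots,\theta^n$ isotropic and mutually orthogonal. Since $\mathfrak{l}$ enters $\zeta$ and $\omega$ only through $|\mathfrak{l}|^2$, which cancels against the normalization $\escal{\rho,\overline{\rho}}$, we may take $|\mathfrak{l}|=1$ and ignore it. Rescaling the $\theta^a$ by a common constant (or by individual constants, since numerator and denominator are both of bidegree $(1,1)$ in each $\theta^a$) does not change $\zeta$ or $\omega$. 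Hence we may assume $\theta^a=e^{2a-1}+ie^{2a}$ for an orthonormal coframe $e^1,\ldots,e^{2n+1}$ of $T^*_pM$. This is possible because isotropic, mutually orthogonal $\theta^a$ have real and imaginary parts that, after normalization, are orthonormal of the same length. Indeed, $g(\theta^a,\theta^b)=0$ and $g(\theta^a,\overline{\theta}^b)=2\delta_{ab}$ after rescaling, which follows from non-degeneracy of the Hermitian form restricted to the isotropic space spanned by the $\theta^a$.

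In this frame I would compute both invariants explicitly. First,
\begin{equation*}
\rho\wedge\overline{\rho}=\pm\prod_a(\theta^a\wedge\overline{\theta}^a)=\pm(-2i)^n\,e^{12}\wedge e^{34}\wedge\cdots\wedge e^{2n-1,2n},
\end{equation*}
so $\ast(\rho\wedge\overline{\rho})$ is a multiple of $e^{2n+1}$. Since $\escal{\rho,\overline{\rho}}=2^n$ in this normalization, we get $\zeta=\pm e^{2n+1}$, and the sign is fixed by the $i^n$ factor together with the conventions. Second, for the contraction $\rho\triangle_{n-1}\overline{\rho}$ I would use that $\rho(e_{i_1},\ldots,e_{i_{n-1}})$ is nonzero only when the indices pick one of $e_{2a-1},e_{2a}$ for all but one value $a=b$, in which case it equals a unimodular multiple of $\theta^b$. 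Summing over these choices, with the $(n-1)!$ orderings cancelling the prefactor and each choice of real or imaginary slot contributing a factor $1$ via $|\pm1|^2=|\pm i|^2=1$, gives
\begin{equation*}
\rho\triangle_{n-1}\overline{\rho}=2^{n-1}\sum_{b}\theta^b\wedge\overline{\theta}^b=-i\,2^n\sum_b e^{2b-1}\wedge e^{2b}.
\end{equation*}
Therefore $\omega=\pm\sum_b e^{2b-1}\wedge e^{2b}$.

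The conclusion is then immediate. The endomorphism $\varphi$ acts as a rotation $e_{2b-1}\mapsto\pm e_{2b}$, $e_{2b}\mapsto\mp e_{2b-1}$ on each plane and kills $e_{2n+1}=\pm\xi$. Hence $\varphi^2=-\Id$ on $\xi^\perp$ and $\varphi^2(\xi)=0$, which is exactly $\varphi^2=-\Id+\zeta\otimes\xi$. The overall signs of $\zeta$ and $\omega$ are irrelevant here, since $\zeta\otimes\xi=e^{2n+1}\otimes e_{2n+1}$ either way.

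The main obstacle is the combinatorial bookkeeping in $\rho\triangle_{n-1}\overline{\rho}$. The point to check is that the cross terms with $a\neq b$ cancel, so that no $\theta^a\wedge\overline{\theta}^b$ with $a\neq b$ survives. I would handle this by choosing the frame so that $\theta^a(e_j)$ is supported on $j\in\{2a-1,2a\}$. This makes the evaluation $\rho(e_{i_1},\ldots,e_{i_{n-1}})$ vanish unless the indices select distinct blocks, and within a fixed omitted block $b$ the $\theta^a$, $a\neq b$, pair with their own conjugates. An alternative, frame-free route is to use the identity $(\theta^1\wedge\cdots\wedge\theta^n)\triangle_{n-1}(\overline{\theta}^1\wedge\cdots\wedge\overline{\theta}^n)$, computed as a sum over $b$ of products of Gram determinants that are diagonal because $g(\theta^a,\overline{\theta}^c)=2\delta_{ac}$.
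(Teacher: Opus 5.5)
Your proposal is correct and follows essentially the paper's route. You put $\rho$ in the pointwise normal form $c\,(\theta^1\wedge\cdots\wedge\theta^n)\otimes\mathfrak{l}$ with $\theta^j=e^{2j-1}+ie^{2j}$ in an orthonormal frame, compute $\rho\triangle_{n-1}\overline{\rho}=|c|^2 2^{n-1}\sum_j\theta^j\wedge\bar{\theta}^j$ and $\omega=-\sum_j e^{2j-1}\wedge e^{2j}$, and read off $\varphi$. The only cosmetic difference is the order: the paper first derives $\iota_\xi\rho=0$ from a Hodge-star identity and then adapts the frame to $\xi$, whereas you adapt the frame to $\rho$ and obtain $\zeta=\pm e^{2n+1}$ directly from $\rho\wedge\overline{\rho}$.

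One small correction: if ``mutually orthogonal'' refers to the complex-bilinear extension of $g$, rescaling alone does not give $g(\theta^a,\overline{\theta}^b)=2\delta_{ab}$. You need a Gram--Schmidt change of basis of $\mathrm{span}\{\theta^a\}$ with respect to the positive-definite Hermitian form. This only multiplies $\theta^1\wedge\cdots\wedge\theta^n$ by a nonzero constant, so $\zeta$ and $\omega$ are unchanged.
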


\begin{proof}
We first show that $\iota_\xi\rho=0$. We compute:
\begin{equation}\label{eq:contraccion-xi}
\iota_\xi(\rho\wedge\overline{\rho})=\iota_\xi(*(*(\rho\wedge\overline{\rho})))=*(*(\rho\wedge\overline{\rho})\wedge\zeta)=i^{-n}\escal{\rho,\overline{\rho}}*(\zeta\wedge\zeta)=0.
\end{equation}

\noindent
This implies that $\iota_\xi\rho=\iota_\xi\overline{\rho}=0$ since $\rho\wedge\overline{\rho}$ is maximally decomposable.\medskip

Now let $\{e_1,\ldots,e_{2n},\xi\}$ be a local orthonormal frame of $(M,g)$ and denote by $\{e^1,\ldots,e^{2n},\zeta\}$ the dual coframe. Set $\theta^j:=e^{2j-1}+ie^{2j}$ for $j=1,\ldots,n$. Since the definition of a Cartan $n$-form implies its components are isotropic and mutually orthogonal, we can locally write $\rho=c(\theta^1 \wedge \cdots \wedge \theta^n)\otimes\mathfrak{l}$ for a local unit section $\mathfrak{l}$ of $\mathcal{L}$ and a locally defined nowhere vanishing complex-valued function $c$. Since $\escal{\theta^j,\theta^k}=0$ and $\escal{\theta^j,\bar{\theta}^k}=2\delta_{jk}$ for all $j,k$, we have $\escal{\rho,\overline{\rho}}=|c|^2 2^n$. Using the definition of $\rho \triangle_{n-1}\overline{\rho}$ we obtain:
\begin{equation*}
\rho\triangle_{n-1}\overline{\rho}=|c|^2 2^{n-1}\sum_{j=1}^n\theta^j\wedge\bar{\theta}^j.
\end{equation*}

\noindent
On the other hand, applying the definition of $\omega$ yields:
\begin{equation*}
\omega=-i\frac{\rho\triangle_{n-1}\overline{\rho}}{\escal{\rho,\overline{\rho}}}=-\frac{i}{2}\sum_{j=1}^n\theta^j\wedge\bar{\theta}^j=-\sum_{j=1}^n e^{2j-1}\wedge e^{2j}.
\end{equation*}

\noindent
From $\varphi(X)=(\iota_X\omega)^\sharp$ we obtain $\varphi(e_{2j-1})=-e_{2j}$, $\varphi(e_{2j})=e_{2j-1}$, and $\varphi(\xi)=0$. Hence $\varphi^2(e_{2j-1})=-e_{2j-1}$, $\varphi^2(e_{2j})=-e_{2j}$, and $\varphi^2(\xi)=0$. Therefore, if $X=X_{\mathcal{D}}+\zeta(X)\xi$ with $X_{\mathcal{D}}\in\mathcal{D}:=\ker(\zeta)$, then we obtain:
\begin{equation*}
\varphi^2(X)=-X_{\mathcal{D}}=-X+\zeta(X)\xi,
\end{equation*}

\noindent
which proves the claim from the statement.
\end{proof}

Lemma~\ref{lemma:varphi_square} shows that $(M,g,\varphi,\xi,\zeta)$ is an almost contact metric manifold, obtained from a Cartan $n$-form $\rho$ only by using algebraic identities. We now use that $\rho$ satisfies Equation~\eqref{eq:imaginary-killing-rho} to prove that this almost contact metric structure is precisely $\mu$-Kenmotsu.

\begin{lemma}\label{lemma:nabla_zeta}
The one-form $\zeta\in\Omega^1(M)$ satisfies $\nabla^g_X\zeta = \mu(X^\flat - \zeta(X)\zeta)$ for all $X\in \Gamma(TM)$. In particular, $\zeta$ is closed and $\mathcal{D} := \ker(\zeta)$ is an integrable distribution.
\end{lemma}

\begin{proof}
Let $c_n:=i^n\mu$. Then, by Proposition~\ref{prop:KillingSpinorBilinear}, we have $\nabla^{g,A}_X\rho=c_n*(X^\flat\wedge\rho)$ for all $X\in\Gamma(TM)$. We first compute the differential of the squared norm $u:=\escal{\rho,\overline{\rho}}$:
\begin{equation*}
X(u)=c_n\escal{*(X^\flat\wedge\rho),\overline{\rho}}+\overline{c}_n\escal{\rho,*(X^\flat\wedge\overline{\rho})}.
\end{equation*}

\noindent
Note that:
\begin{equation*}
\escal{*(X^\flat\wedge\rho),\overline{\rho}}=\escal{X^\flat\wedge\rho,*\overline{\rho}}=\escal{\rho,\iota_X(*\overline{\rho})}=\escal{\rho,*(\overline{\rho}\wedge X^\flat)}=(-1)^n\escal{\rho,*(X^\flat\wedge\overline{\rho})}.
\end{equation*}

\noindent
Hence $X(u)=((-1)^nc_n+\overline{c}_n)\escal{\rho,*(X^\flat\wedge\overline{\rho})}$. Since $\mu$ is a real function, we have $\overline{c}_n=(-i)^n\mu=(-1)^n c_n$. The two terms thus add giving:
\begin{equation*}
X(u)=2\overline{c}_n\escal{\rho,*(X^\flat\wedge\overline{\rho})}.
\end{equation*}

\noindent
Using $i^n\rho\wedge\overline{\rho} = u *\zeta$, which implies $\rho\wedge\overline{\rho} = (-i)^n u *\zeta$, we have:
\begin{equation*}
\escal{\rho,*(X^\flat\wedge\overline{\rho})}\nu = \rho\wedge *(*(X^\flat\wedge\overline{\rho})) = (-1)^n X^\flat\wedge\rho\wedge\overline{\rho} = (-1)^n(-i)^n u\zeta(X)\nu = i^n u \zeta(X) \nu,
\end{equation*}

\noindent
where $\nu$ is the Riemannian volume form. Thus:
\begin{equation*}
X(u)=2(-i)^n\mu (i^n u\zeta(X))=2\mu u\zeta(X).
\end{equation*}

\noindent
Using this, we compute the covariant derivative of $\zeta$:
\begin{align*}
\nabla^{g}_X\zeta &= \nabla^{g}_X \left(\frac{i^n}{u} *(\rho\wedge\overline{\rho})\right)\\ 
&= -2\mu\zeta(X)\zeta+\frac{i^n}{u}*\big(c_n*(X^\flat\wedge\rho)\wedge\overline{\rho} + \overline{c}_n\rho\wedge *(X^\flat\wedge\overline{\rho})\big).
\end{align*}

\noindent
Now we evaluate the tensor contribution. Define the $(0,2)$-tensor field $T$ by $T(X,Y):=\iota_Y(*(*(X^\flat\wedge\rho)\wedge\overline{\rho}))$ for all $X$, $Y\in\Gamma(TM)$. Then:
\begin{align*}
    T(X,Y)&=\iota_Y(*(*(X^\flat\wedge\rho)\wedge\overline{\rho}))=*(*(X^\flat\wedge\rho)\wedge\overline{\rho}\wedge Y^\flat)\\
    &=(-1)^n*(Y^\flat\wedge\overline{\rho}\wedge*(X^\flat\wedge\rho))=(-1)^n*\escal{Y^\flat\wedge\overline{\rho},X^\flat\wedge\rho}\nu\\
    &=(-1)^n\escal{X^\flat\wedge\rho,Y^\flat\wedge\overline{\rho}}=(-1)^n\big(g(X,Y)u-\escal{\iota_X\overline{\rho},\iota_Y\rho}\big).
\end{align*}

\noindent
Observe that $\rho \wedge \ast(X^\flat \wedge \overline{\rho}) = (-1)^n \ast(X^\flat \wedge \overline{\rho}) \wedge \rho$. The bracketed term in the covariant derivative evaluated on $Y$ is therefore $c_n T(X,Y) + \overline{c}_n (-1)^n \overline{T(X,Y)}$. Since $\overline{c}_n=(-1)^nc_n$, this equals $c_n \big(T(X,Y) + \overline{T(X,Y)}\big)$. Multiplying by $\frac{i^n}{u}$, and noticing $i^n c_n = i^{2n}\mu = (-1)^n\mu$, we obtain:
\begin{equation*}
(-1)^n\frac{\mu}{u} \big(T(X,Y) + \overline{T(X,Y)}\big) = \frac{\mu}{u} \big(2g(X,Y)u - (\escal{\iota_X\overline{\rho},\iota_Y\rho} + \escal{\iota_X\rho,\iota_Y\overline{\rho}})\big).
\end{equation*}

\noindent
Since $\rho = (\theta^1\wedge \cdots \wedge \theta^n) \otimes \mathfrak{l}$ is a Cartan $n$-form built from isotropic and mutually orthogonal components, evaluating the symmetric sum over its factors yields exactly the metric on the distribution $\mathcal{D}=\ker(\zeta)$:
\begin{equation*}
\escal{\iota_X\overline{\rho},\iota_Y\rho} + \escal{\iota_X\rho,\iota_Y\overline{\rho}} = \frac{u}{2}\sum_{j=1}^n(\theta^j\odot\bar{\theta}^j)(X,Y) = u\big(g(X,Y)-\zeta(X)\zeta(Y)\big).
\end{equation*}

\noindent
Substituting this back, we obtain $\frac{\mu}{u} \big( u g(X,Y) + u \zeta(X)\zeta(Y) \big) = \mu \big( g(X,Y) + \zeta(X)\zeta(Y) \big)$. Subtracting the variation of the norm contribution $2\mu\zeta(X)\zeta(Y)$ yields:
\begin{equation*}
(\nabla^g_X\zeta)(Y) = \mu\big(g(X,Y)-\zeta(X)\zeta(Y)\big).
\end{equation*}

\noindent
Therefore, we obtain the formula in the statement. Since $\nabla^g\zeta$ is a symmetric tensor:
\begin{equation*}
\dd\zeta(X,Y)=(\nabla_X^g\zeta)(Y)-(\nabla_Y^g\zeta)(X)=0,
\end{equation*}

\noindent
thus $\zeta$ is closed and $\mathcal{D}=\ker(\zeta)$ is an integrable distribution by the Frobenius theorem.
\end{proof}

\begin{remark}\label{rmk:mu_exact}
In the proof of Lemma~\ref{lemma:nabla_zeta}, we have shown that $X(\escal{\rho,\overline{\rho}})=2\mu\escal{\rho,\overline{\rho}}\zeta(X)$ for all $X\in\Gamma(TM)$ or, equivalently, $\dd (\log\langle\rho , \overline{\rho} \rangle) =2\mu\zeta$. Hence, $\mu\zeta$ is an exact one-form.
\end{remark}

\begin{lemma}\label{lemma:nabla_omega}
The fundamental two-form $\omega\in\Omega^2(M)$ satisfies $\nabla^g_X\omega = \mu(\iota_X\omega \wedge \zeta)$ for all $X \in \Gamma(TM)$.
\end{lemma}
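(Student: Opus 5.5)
The plan is to differentiate the defining formula $i\omega=\rho\triangle_{n-1}\overline{\rho}/u$, with $u=\escal{\rho,\overline{\rho}}$, directly. We feed in Equation~\eqref{eq:imaginary-killing-rho} from Proposition~\ref{prop:KillingSpinorBilinear} and the identity $X(u)=2\mu u\,\zeta(X)$ from Remark~\ref{rmk:mu_exact}. We then reduce the resulting terms to $\omega$ and $\zeta$ by algebraic identities for Cartan $n$-forms, checked in the adapted local frame of Lemma~\ref{lemma:varphi_square}.

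\textbf{Step 1 (Leibniz rule).} The operation $\triangle_{n-1}$ is a metric contraction, and the $\mathcal{L}\otimes\overline{\mathcal{L}}$ factors pair to a trivially flat bundle. Hence $\nabla^{g,A}$ differentiates $\rho\triangle_{n-1}\overline{\rho}$ as a derivation. With $c_n=i^n\mu$ this gives
\begin{equation*}
i\nabla^g_X\omega=-2\mu\zeta(X)\,i\omega+\frac{1}{u}\Big(c_n\,{*}(X^\flat\wedge\rho)\triangle_{n-1}\overline{\rho}+\overline{c}_n\,\rho\triangle_{n-1}{*}(X^\flat\wedge\overline{\rho})\Big).
\end{equation*}

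\textbf{Step 2 (rewrite the Hodge star).} We need an algebraic formula for ${*}(X^\flat\wedge\rho)$. In the adapted frame we have $\rho=c\,\theta^1\wedge\cdots\wedge\theta^n\otimes\mathfrak{l}$ and $\iota_\xi\rho=0$. A direct check on $\theta^1\wedge\cdots\wedge\theta^n$ gives a \emph{twisted self-duality} relation
\begin{equation*}
{*}\rho=\lambda_n\,\zeta\wedge\rho,
\end{equation*}
where $\lambda_n$ is a constant power of $i$ fixed by the orientation and by the choice of $\ell$. Then
\begin{equation*}
{*}(X^\flat\wedge\rho)=\pm\iota_X{*}\rho=\pm\lambda_n\big(\zeta(X)\rho-\zeta\wedge\iota_X\rho\big),
\end{equation*}
and the conjugate formula holds for $\overline{\rho}$.

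\textbf{Step 3 (substitute and contract).} Substituting, the bracket in Step 1 splits into two kinds of terms.
\begin{itemize}
\item Terms proportional to $\zeta(X)\,\rho\triangle_{n-1}\overline{\rho}=\zeta(X)\,u\,i\omega$. These should cancel exactly the norm contribution $-2\mu\zeta(X)\,i\omega$.
\item Terms of the form $(\zeta\wedge\iota_X\rho)\triangle_{n-1}\overline{\rho}$ and its conjugate. Because $\iota_\xi\rho=0$, only the slot carrying $\zeta$ survives, leaving contractions of the form $\zeta\wedge\iota_X(\rho\triangle_{n-1}\overline{\rho})$ up to sign. Together these should produce $u\,i\,\mu\,(\iota_X\omega\wedge\zeta)$.
\end{itemize}
This yields $\nabla^g_X\omega=\mu\,\iota_X\omega\wedge\zeta$. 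As consistency checks, taking $X=\xi$ gives $\nabla_\xi\omega=0$. Likewise, contracting with $\xi$ should be compatible with Lemma~\ref{lemma:nabla_zeta} via $\iota_\xi\omega=0$.

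\textbf{Main obstacle.} I expect the hard part to be the sign and phase bookkeeping in Steps 2 and 3. This covers the constant $\lambda_n$, the sign from $\iota_X{*}$ versus ${*}(X^\flat\wedge\cdot)$, and the powers $i^n$. Only with these correct do the two conjugate contributions add, using $\overline{c}_n=(-1)^nc_n$ as in Lemma~\ref{lemma:nabla_zeta}, rather than cancel. To control this, I would first verify all identities on the model form $\theta^1\wedge\cdots\wedge\theta^n$ for $n=1,2$. A fallback is to write $\omega=-\tfrac{i}{2}\sum_j\theta^j\wedge\bar\theta^j$ in the frame. We then read off $\nabla\theta^j$ modulo $\mathrm{span}\{\theta^k\}$ from Equation~\eqref{eq:imaginary-killing-rho}, together with $\nabla\zeta$ from Lemma~\ref{lemma:nabla_zeta}. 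Hermitian connection terms within $\mathcal{D}$ drop out of $\omega$.
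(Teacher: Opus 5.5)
Your proposal is correct, but it is organized differently from the paper's proof. The paper works component by component. The components $(\nabla^g_X\omega)(Y,\xi)$ come for free from $\nabla^g_X\xi=\mu(X-\zeta(X)\xi)$ (Lemma~\ref{lemma:nabla_zeta}) and $\iota_\xi\omega=0$. For $Y,Z\in\mathcal{D}$ it treats $X=\xi$ separately: there $\nabla^{g,A}_\xi\rho=\mu\rho$, and this is cancelled by the growth of $\escal{\rho,\overline{\rho}}$. For $X\in\mathcal{D}$ it uses $\nabla^{g,A}_X\rho=(-1)^n\mu\,\iota_X\rho\wedge\zeta$, which has no $\mathcal{D}\times\mathcal{D}$ contribution.

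You instead derive one identity valid for all $X$, and two remarks remove most of the bookkeeping you fear. First, $\lambda_n=i^n$. This is forced by the normalization $\zeta=i^n*(\rho\wedge\overline{\rho})/\escal{\rho,\overline{\rho}}$; $\ell$ plays no role here and only fixes the coefficient $i^n\mu$. The Killing equation then reads $\nabla^{g,A}_X\rho=\mu\zeta(X)\rho-\mu\,\zeta\wedge\iota_X\rho$, which is Lemma~\ref{lem:identidad-algebraica}. Its coefficients are real, so conjugation is trivial and $\overline{c}_n=(-1)^nc_n$ is never needed. Second, your second bullet is the identity
\begin{equation*}
(\zeta\wedge\iota_X\rho)\triangle_{n-1}\overline{\rho}+\rho\triangle_{n-1}(\zeta\wedge\iota_X\overline{\rho})=\zeta\wedge\iota_X(\rho\triangle_{n-1}\overline{\rho}).
\end{equation*}
Each summand supplies one of the two halves of $\iota_X(\rho\triangle_{n-1}\overline{\rho})$, so neither is individually $\pm\zeta\wedge\iota_X(\rho\triangle_{n-1}\overline{\rho})$.

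This identity needs no frame check. Since $\rho$, $\overline{\rho}$ and $\rho\triangle_{n-1}\overline{\rho}$ are all annihilated by $\iota_\xi$, it is exactly the Leibniz rule $S_X(\rho\triangle_{n-1}\overline{\rho})=(S_X\rho)\triangle_{n-1}\overline{\rho}+\rho\triangle_{n-1}(S_X\overline{\rho})$. Here $S_X$ is the skew endomorphism of Equation~\eqref{eq:difference-forms} with $\mu$ replaced by $1$, and skew endomorphisms act as derivations commuting with metric contractions. With $\rho\triangle_{n-1}\overline{\rho}=iu\,\omega$ this gives $\nabla^g_X\omega=-\mu\,\zeta\wedge\iota_X\omega=\mu\,\iota_X\omega\wedge\zeta$.

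Your computation is thus the frame-level form of Proof~II in Section~\ref{sec:metric-connection-interpretation}. It amounts to $\hat\nabla^A_X\rho=\mu\zeta(X)\rho$, hence $\hat\nabla\omega=0$, and it uses from Lemma~\ref{lemma:nabla_zeta} only $X(u)=2\mu u\,\zeta(X)$. It also yields the full tensor in one stroke. The paper's route avoids the contraction identity and reuses $\nabla^g\xi$, at the cost of a case distinction.
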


\begin{proof}
From Lemma~\ref{lemma:nabla_zeta} we get $\nabla^g_X\zeta=\mu(X^\flat - \zeta(X)\zeta)$, thus $\nabla^g_X\xi=(\nabla^g_X\zeta)^\sharp=\mu(X - \zeta(X)\xi)$. Using $\iota_{\xi}\omega=0$ we compute the covariant derivative evaluated with the Reeb vector field:
\begin{align*}
(\nabla^g_X\omega)(Y,\xi)&=-\omega(Y,\nabla^g_X\xi)=-\omega(Y, \mu(X-\zeta(X)\xi))\\
&=-\mu\omega(Y,X)=\mu\omega(X,Y).
\end{align*}

\noindent
On the other hand, evaluating the target expression yields:
\begin{equation*}
\mu(\iota_X\omega \wedge \zeta)(Y,\xi) = \mu\big(\omega(X,Y)\zeta(\xi) - \zeta(Y)\omega(X,\xi)\big) = \mu\omega(X,Y).
\end{equation*}

\noindent
Hence, the two expressions agree when one of the arguments is $\xi$.\medskip

It remains to show that $(\nabla^g_X\omega)(Y,Z)=0$ for all $Y,Z\in\mathcal{D}=\ker(\zeta)$. We consider separately the cases $X=\xi$ and $X \in \mathcal{D}$.\medskip

First, let us consider $X=\xi$. From Proposition~\ref{prop:KillingSpinorBilinear} we have $\nabla^{g,A}_\xi\rho=i^n \mu *(\zeta\wedge\rho)$. We know that $\rho \wedge \overline{\rho} = (-i)^n \escal{\rho,\overline{\rho}}*\zeta$, which implies $\zeta \wedge \rho \wedge \overline{\rho} = (-i)^n \escal{\rho,\overline{\rho}} \nu$. Therefore, $*\overline{\rho} = i^n \overline{\rho} \wedge \zeta$, and by complex conjugation, $*\rho = (-i)^n \rho \wedge \zeta$. Using $*(X^\flat \wedge \rho) = (-1)^n \iota_X (*\rho)$, we obtain:
\begin{equation*}
\nabla^{g,A}_\xi\rho = i^n\mu *(\zeta\wedge\rho) = i^n \mu (-1)^n \iota_\xi(*\rho) = i^n \mu (-1)^n \iota_\xi((-i)^n \rho \wedge \zeta) = \mu \rho.
\end{equation*}

\noindent
Consequently, $\nabla^{g,A}_\xi\overline{\rho}=\mu\overline{\rho}$. Using $\nabla^{g}_\xi(\rho\triangle_{n-1}\overline{\rho})=(\nabla^{g,A}_\xi\rho)\triangle_{n-1}\overline{\rho}+\rho\triangle_{n-1}(\nabla^{g,A}_\xi\overline{\rho})$ we conclude that $\nabla^{g}_\xi(\rho\triangle_{n-1}\overline{\rho})=2\mu(\rho\triangle_{n-1}\overline{\rho})$. Since $\xi(\escal{\rho,\overline{\rho}})=2\mu\escal{\rho,\overline{\rho}}$ from the proof of Lemma~\ref{lemma:nabla_zeta}, we deduce that:
\begin{equation*}
\nabla^g_\xi\omega = \nabla^g_\xi \left( \frac{-i}{\escal{\rho,\overline{\rho}}} \rho\triangle_{n-1}\overline{\rho} \right) = \frac{-i}{\escal{\rho,\overline{\rho}}} 2\mu (\rho\triangle_{n-1}\overline{\rho}) - \frac{-i}{\escal{\rho,\overline{\rho}}^2} (2\mu\escal{\rho,\overline{\rho}}) \rho\triangle_{n-1}\overline{\rho} = 0.
\end{equation*}

\noindent
Now assume that $X$, $Y$, $Z\in\mathcal{D}$. Then:
\begin{equation*}
*(X^\flat\wedge\rho)=(-1)^n\iota_X(*\rho)=(-1)^n\iota_X((-i)^n\rho\wedge\zeta) = i^n \iota_X\rho\wedge\zeta
\end{equation*}

\noindent
and $\nabla^{g,A}_X\rho=i^n\mu*(X^\flat\wedge\rho)= i^{2n} \mu\iota_X\rho\wedge\zeta = (-1)^n \mu \iota_X\rho\wedge\zeta$. Since $\iota_{\xi}\rho=\iota_{\xi}\overline{\rho}=0$ we have:
\begin{align*}
\escal{\iota_Y(\iota_X\rho\wedge\zeta),\iota_Z\overline{\rho}}&=(-1)^{n-2}\escal{\zeta\wedge\iota_Y\iota_X\rho,\iota_Z\overline{\rho}}\\
&=(-1)^{n-2}\escal{\iota_Y\iota_X\rho,\iota_\xi\iota_Z\overline{\rho}}=0.
\end{align*}

\noindent
Since $X\in\mathcal{D}$, we have $X(\escal{\rho,\overline{\rho}})=0$. A straightforward computation using $\omega=(i\escal{\rho,\overline{\rho}})^{-1}\rho\triangle_{n-1}\overline{\rho}$, $(\rho\triangle_{n-1}\overline{\rho})(Y,Z)=\escal{\iota_Y\rho,\iota_Z\overline{\rho}}-\escal{\iota_Y\overline{\rho},\iota_Z\rho}$, and the above result concludes that $(\nabla^g_X\omega)(Y,Z)=0$ for all $X$, $Y$, $Z\in\mathcal{D}$.
\end{proof}

The formula in Lemma~\ref{lemma:nabla_omega} is equivalent to $(\nabla^g_X\varphi)Y=\mu\big(g(\varphi(X),Y)\xi-\zeta(Y)\varphi(X)\big)$, which is precisely the integrability condition for the tuple $(\varphi,\xi,\zeta)$ to constitute a well-defined $\mu$-Kenmotsu structure on $(M,g)$.\medskip

Therefore, we have proved the following result.

\begin{prop}
Let $(M,g)$ be a $(2n+1)$-dimensional oriented and spin-c Riemannian manifold admitting a pure spin-c Killing spinor with imaginary Killing function $i\mu$, where $\mu \in C^\infty(M, \mathbb{R})$. Then $(M,g)$ is an exact $\mu$-Kenmotsu manifold.
\end{prop}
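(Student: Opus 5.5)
Since the paper states that this proposition has already been established, the plan is to assemble the preceding lemmas into the two claims: the structure is $\mu$-Kenmotsu, and it is exact. Starting from the pure spin-c Killing spinor $\eta$, Proposition~\ref{prop:KillingSpinorBilinear} replaces $\eta$ by its complex-bilinear square $\rho\in\Omega^n_\C(M,\cL)$. By Lemma~\ref{lemma:squaresriemannian2n1}, $\rho$ is a nowhere vanishing Cartan $n$-form satisfying Equation~\eqref{eq:imaginary-killing-rho}. From $\rho$ I build $\zeta$, $\xi=\zeta^\sharp$, $\omega$ and $\varphi$ exactly as above. Lemma~\ref{lemma:varphi_square} together with $\zeta(\xi)=1$ and the skew-symmetry $g(\varphi X,Y)=\omega(X,Y)$ shows that $(g,\varphi,\xi,\zeta)$ is an almost contact metric structure. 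This step uses only the algebraic form of $\rho$.

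For the differential condition, I take Lemma~\ref{lemma:nabla_omega}, $\nabla^g_X\omega=\mu\,\iota_X\omega\wedge\zeta$, and convert it into Equation~\eqref{eq:Kenmotsu}. Since $g((\nabla^g_X\varphi)Y,Z)=(\nabla^g_X\omega)(Y,Z)$, evaluating the right-hand side on $(Y,Z)$ gives
\begin{equation*}
\mu\big(\omega(X,Y)\zeta(Z)-\zeta(Y)\omega(X,Z)\big)=\mu\, g\big(g(\varphi X,Y)\xi-\zeta(Y)\varphi X,\,Z\big).
\end{equation*}
Because $Z$ is arbitrary, this yields $(\nabla^g_X\varphi)Y=\mu\big(g(\varphi(X),Y)\xi-\zeta(Y)\varphi(X)\big)$, which is precisely the $\mu$-Kenmotsu condition. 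I will note that Lemma~\ref{lemma:nabla_zeta} is consistent with this: setting $Y=\xi$ and using $\varphi\xi=0$ gives $\varphi(\nabla^g_X\xi)=\mu\varphi X$, which agrees with $\nabla^g_X\xi=\mu(X-\zeta(X)\xi)$.

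Exactness follows from Remark~\ref{rmk:mu_exact}. The function $u=\escal{\rho,\overline\rho}$ is globally defined, real and positive: it is positive because $\rho$ is nowhere vanishing, and it is globally defined because the $\cL$ factors pair off through the Hermitian structure. Since $\dd\log u=2\mu\zeta$, we get $\mu\zeta=\dd\big(\tfrac12\log u\big)$, which is exact.

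The main obstacle is not this assembly, which is routine, but the analytic input it relies on, namely the computation in Lemma~\ref{lemma:nabla_omega} of $(\nabla^g_X\omega)(Y,Z)$ for $X,Y,Z\in\mathcal D$. If I had to verify that computation independently, I would work in the adapted frame $\theta^j=e^{2j-1}+ie^{2j}$. There $\nabla^{g,A}_X\rho=(-1)^n\mu\,\iota_X\rho\wedge\zeta$ for $X\in\mathcal D$, so every term in the derivative of $\rho\triangle_{n-1}\overline\rho$ restricted to $\mathcal D$ carries a factor $\zeta$, and such terms vanish on $\mathcal D$ because $\iota_\xi\rho=0$.
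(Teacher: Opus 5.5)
Your proposal is correct and follows the paper's own argument. Lemma~\ref{lemma:varphi_square} gives the almost contact metric structure, Remark~\ref{rmk:mu_exact} ($\dd\log\escal{\rho,\overline{\rho}}=2\mu\zeta$) gives exactness, and Lemma~\ref{lemma:nabla_omega} gives the Kenmotsu condition; your explicit conversion of $\nabla^g_X\omega=\mu\,\iota_X\omega\wedge\zeta$ into Equation~\eqref{eq:Kenmotsu} via $g((\nabla^g_X\varphi)Y,Z)=(\nabla^g_X\omega)(Y,Z)$ correctly spells out the equivalence that the paper only asserts.
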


We conclude this section with a seven-dimensional example illustrating the role of the purity assumption in Theorem~\ref{thm:maintheorem}. More precisely, there exist seven-dimensional Riemannian spin manifolds carrying imaginary Killing spinors which are not pure and whose induced almost contact metric structure is not Kenmotsu. To conclude that, we first need the following lemma.

\begin{lemma}\label{lemma:pure-implies-type-I}
Every pure irreducible complex spinor $\eta \in \Gamma (S)$ on $(M,g)$ satisfies $g(V_\eta,V_\eta) = \scS(\eta,\eta)^2$, where $V_\eta\in\Gamma(TM)$ is the Dirac current defined by $g(V_\eta, X) := \scS(X \cdot \eta, \eta)$ and $\scS$ is a positive-definite admissible Hermitian pairing of positive adjoint type.
\end{lemma}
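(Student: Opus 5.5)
The statement is pointwise and algebraic, so I will work at a fixed $p\in M$ with the model $\mathbb{C}^{2^n}$ of $\mathbb{C}\mathrm{l}(2n+1)$ and a unit-norm Hermitian pairing $\scS$. Since both sides scale as $|\eta|^4$, I may normalize $\scS(\eta,\eta)=1$. The plan is to show that the Dirac current of a pure spinor is a unit vector, namely $\pm\xi$, by pairing Clifford multiplication against the annihilator.

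\textbf{Step 1: an adapted basis.} Purity gives $\mathrm{Ann}(\eta_p)=\mathrm{span}\{\bar\theta_1,\dots,\bar\theta_n\}$ (up to conjugation convention), with $\theta_j=e_{2j-1}+ie_{2j}$. Here $\{e_1,\dots,e_{2n},e_{2n+1}\}$ is an orthonormal basis and $e_{2n+1}$ is orthogonal to $\mathrm{Ann}\oplus\overline{\mathrm{Ann}}$. Such a basis exists by the standard Witt decomposition of a maximal isotropic subspace of $T_pM\otimes\mathbb{C}$ in odd dimension; this is exactly the frame of Lemma~\ref{lemma:varphi_square}.

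\textbf{Step 2: components of $V_\eta$ along $e_1,\dots,e_{2n}$.} Positive adjoint type means $\scS(X\cdot\alpha,\beta)=\scS(\alpha,X\cdot\beta)$ for real $X$, extended complex-antilinearly in the second slot. Compute
\[
g(V_\eta,\theta_j)=\scS(\theta_j\cdot\eta,\eta)=\scS(\eta,\bar\theta_j\cdot\eta)=0 .
\]
The corresponding quantity with $\bar\theta_j$ equals $\scS(\bar\theta_j\cdot\eta,\eta)=0$ directly. Hence $g(V_\eta,e_k)=0$ for $k\le 2n$.

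\textbf{Step 3: the component along $e_{2n+1}$.} First, $e_{2n+1}$ anticommutes with all $\theta_j,\bar\theta_j$, so $e_{2n+1}\cdot\eta$ is again annihilated by every $\bar\theta_j$. Second, the space of spinors annihilated by a maximal isotropic subspace is one-dimensional; this is the standard Fock-space description of the irreducible module, obtained by applying $\theta_j$'s to the vacuum. Together these give $e_{2n+1}\cdot\eta=\lambda\eta$. Since $e_{2n+1}^2=1$, we get $\lambda=\pm1$. A shortcut for the sign: $e_{2n+1}$ is proportional to the volume element times $\prod_j \theta_j\bar\theta_j$-type factors, which act on the vacuum by a scalar, and the volume element acts by $\ell$ times a power of $i$. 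Only $|\lambda|=1$ is needed, though. Then $g(V_\eta,e_{2n+1})=\lambda\,\scS(\eta,\eta)$, so
\[
g(V_\eta,V_\eta)=\lambda^2\scS(\eta,\eta)^2=\scS(\eta,\eta)^2.
\]

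\textbf{Main obstacle.} The delicate point is the sesquilinear compatibility in Step 2, i.e.\ that the Hermitian adjoint of $\theta_j$ is $\bar\theta_j$ and not $-\bar\theta_j$ or $\theta_j$. This depends on the positive Clifford convention and on $\scS$ being of positive adjoint type. With positive adjoint type, real vectors are self-adjoint, so complexified vectors have conjugate adjoints, and both signs give zero anyway. I would also remark that the uniqueness of the vacuum line in Step 3 is the only place where irreducibility of $S$ enters.
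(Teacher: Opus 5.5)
Your proof is correct and follows the paper's strategy. Purity, together with the uniqueness of the spinor line annihilated by a maximal isotropic subspace, gives $\xi\cdot\eta=\pm\eta$, which fixes the $\xi$-component of $V_\eta$; you then show $V_\eta$ has no component orthogonal to $\xi$. The only difference is in that last step: you move $\theta_j$ across $\scS$ onto $\bar\theta_j\in\mathrm{Ann}(\eta)$, whereas the paper observes that $X\cdot\eta$ for $X\perp\xi$ lies in the opposite eigenspace of the self-adjoint operator $\xi\cdot$, and is therefore $\scS$-orthogonal to $\eta$.
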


\begin{proof}
Let $\xi$ be a unit vector field orthogonal to $\mathrm{Ann}(\eta)\oplus\overline{\mathrm{Ann}(\eta)}$. Since $\xi\cdot\eta$ has the same annihilator as $\eta$, the purity condition implies $\xi\cdot\eta=\varepsilon \eta$ for some $\varepsilon\in\{\pm1\}$. If $g(X,\xi)=0$, then $\eta$ and $X\cdot\eta$ belong to different eigenspaces of $(\xi\cdot)$, and hence $\scS(X\cdot\eta,\eta)=0$. Therefore, $V_\eta=\varepsilon\scS(\eta,\eta)\xi$, and, consequently, $g(V_\eta,V_\eta) = \scS(\eta,\eta)^2$.
\end{proof}

\begin{example}
\label{ex:non-pure-example}
Let $\mu>0$ and let $(F^6,h,J)$ be a strictly nearly Kähler spin manifold which is not locally isometric to the round sphere, normalized so that it carries real Killing spinors with Killing numbers $\pm\mu/2$. Then the warped product:
\begin{equation*}
M=(0,+\infty)\times F,  \qquad g=\mathrm{d}t^2+\sinh^2(\mu t)h
\end{equation*}

\noindent
admits an imaginary Killing spinor $\eta$ with Killing function $i\mu$, see~\cite[Ex.~3.2]{C2026}. However, this spinor satisfies $g(V_\eta,V_\eta)<\scS(\eta,\eta)^2$. Thus, $\eta$ is not pure, since every pure irreducible complex spinor in odd Riemannian dimension satisfies, by Lemma~\ref{lemma:pure-implies-type-I}, $g(V_\eta,V_\eta) = \scS(\eta,\eta)^2$. Moreover, we claim that the almost contact metric structure defined by:
\begin{equation*}
\xi= \frac{\partial}{\partial t}, \qquad \zeta=\mathrm{d}t, \qquad \varphi|_{TF}=J,\qquad \varphi(\xi)=0
\end{equation*}

\noindent
is not Kenmotsu. In fact, using the formulas for warped product manifolds, we obtain that:
\begin{equation*}
(\nabla_X^g\varphi)Y = (\nabla_X^hJ)Y + \mu\coth(\mu t)\,g(JX,Y)\xi
\end{equation*}

\noindent
for $X$, $Y\in\Gamma(TF)$. Since $(F,h,J)$ is strictly nearly Kähler, we have that $\nabla^hJ\neq0$. Therefore, Equation~\eqref{eq:Kenmotsu} cannot hold.
\end{example}


\section{The \emph{if} direction}
\label{sec:if}


Assume that $(M,g)$ is a $(2n+1)$-dimensional exact $\mu$-Kenmotsu manifold, where $\mu \in C^\infty(M, \mathbb{R})$. By definition, $(M,g)$ admits an almost contact metric structure $(g,\varphi,\xi,\zeta)$, where $\xi$ is the Reeb vector field, $\zeta = \xi^\flat$ is the almost contact one-form, and $\varphi$ is the fundamental $(1,1)$-tensor field acting as an isometry on the integrable distribution $\mathcal{D} := \ker(\zeta)$ and satisfying $\varphi^2 = -\mathrm{Id} + \zeta \otimes \xi$. The exact $\mu$-Kenmotsu condition amounts to $\mu\zeta\in \Omega^1(M)$ being exact, together with the following equation:
\begin{equation*}
(\nabla^g_X\varphi)Y=\mu\big(g(\varphi(X),Y)\xi-\zeta(Y)\varphi(X)\big),
\end{equation*}

\noindent
where $\nabla^g$ is the Levi-Civita connection. The restriction:
\begin{equation*}
J = -\varphi\vert_{\mathcal{D}}
\end{equation*}

\noindent
defines an almost complex structure on the real rank-$2n$ distribution $\mathcal{D} \subset TM$, which induces a splitting of its complexification into holomorphic and anti-holomorphic subbundles:
\begin{equation*}
\mathcal{D}_{\mathbb{C}} := \mathcal{D}\otimes \mathbb{C} = \mathcal{D}^{(1,0)} \oplus \mathcal{D}^{(0,1)}.
\end{equation*}

\noindent
Since $\mathcal{D}^{(1,0)}$ is a complex vector bundle of rank $n$, its top exterior power defines a complex line bundle over $M$. We identify the characteristic line bundle $\mathcal{L}$ of the underlying $\mathrm{Spin}^c(2n+1)$-structure with this determinant bundle, setting $\mathcal{L} \cong \wedge^n_{\mathbb{C}} \mathcal{D}^{(1,0)}$.

\begin{remark}
\label{remark:canonical-spinc}
Every $\mu$-Kenmotsu manifold $M^{2n+1}$ is, in particular, an almost contact metric manifold. Hence $TM$ admits a reduction of the structure group from $\mathrm{SO}(2n+1)$ to $\mathrm{U}(n)$. Since the standard inclusion $\mathrm{U}(n)\subset\mathrm{SO}(2n)\subset \mathrm{SO}(2n+1)$ admits a canonical lift $\mathrm{U}(n)\to\mathrm{Spin}^{c}(2n+1)$, see e.g.~\cite{Spin89}, this reduction induces a canonical spin-c structure on $M$.
\end{remark}

\noindent
Let $\{e_1,\ldots,e_{2n},e_{2n+1}=\xi\}$ be a local adapted orthonormal frame on $(M,g)$ such that $J(e_{2j-1}) = e_{2j}$ and $J(e_{2j}) = -e_{2j-1}$ for $j=1, \ldots, n$, with corresponding dual coframe $\{e^1,\ldots,e^{2n},e^{2n+1}=\zeta\}$. We consider $(M,g)$ to be equipped with the orientation determined by the volume form:
\begin{equation*}
\nu = (-1)^{\binom{n}{2}} \frac{(-1)^n}{n!} \omega^n \wedge \zeta,
\end{equation*}

where $\omega=g(\varphi-,-)$ is the fundamental two-form of the almost contact metric structure. Under our chosen local adapted orthonormal frame, the Riemannian volume form is thus given by:
\begin{equation*}
\nu = (-1)^{\binom{n}{2}} e^1 \wedge \cdots \wedge e^{2n} \wedge \zeta.
\end{equation*}

Under this choice of frame, a local frame for $\mathcal{D}^{(1,0)}$ is given by the complex vector fields:
\begin{equation*}
Z_j := \frac{1}{2}(e_{2j-1} - ie_{2j}), \qquad j=1, \ldots, n,
\end{equation*}

\noindent
and a local frame for the dual bundle $\mathcal{D}^{(1,0)\ast}$ is given by the complex one-forms:
\begin{equation*}
\theta^j := e^{2j-1} + ie^{2j}, \qquad j=1, \ldots, n.
\end{equation*}

\noindent
Since $\mu\zeta\in \Omega^1(M)$ is exact, we may choose a function $f \in C^\infty(M, \mathbb{R})$ such that $\dd f = \mu\zeta$. We define the $\mathcal{L}$-valued complex exterior $n$-form $\rho \in \Omega^n_{\mathbb{C}}(M, \mathcal{L})$ as follows:
\begin{equation*}
\rho := e^f \theta \otimes \mathfrak{l} := e^f(e^1 + i e^2) \wedge \cdots \wedge (e^{2n-1} + i e^{2n}) \otimes \mathfrak{l},
\end{equation*}

\noindent
where, using the bundle isomorphism $\mathcal{L} \cong \wedge^n_{\mathbb{C}} \mathcal{D}^{(1,0)}$, the local non-vanishing section $\mathfrak{l} \in \Gamma(\mathcal{L})$ is explicitly identified in terms of the underlying real local orthonormal frame of $(M,g)$ via the complexified exterior product as:
\begin{equation*}
\mathfrak{l} = (e_1 - i e_2) \wedge \cdots \wedge (e_{2n-1} - i e_{2n}) = 2^n Z_1 \wedge \cdots \wedge Z_n \in \Gamma(\wedge^n_{\mathbb{C}} \mathcal{D}^{(1,0)}).
\end{equation*}

\begin{lemma}
\label{lemma:rho-well-defined}
The $\mathcal{L}$-valued complex exterior form $\rho\in\Omega^n_\C(M,\mathcal{L})$ is a globally well-defined Cartan $n$-form.
\end{lemma}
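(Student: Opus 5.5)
The plan is to check two things: that the local expression for $\rho$ does not depend on the choice of adapted frame, so that the local pieces glue to a global section of $\wedge^n T^*_\C M\otimes\mathcal{L}$; and that this section satisfies the algebraic conditions of Lemma~\ref{lemma:squaresriemannian2n1}. The factor $e^f$ causes no trouble, because $f\in C^\infty(M,\R)$ is a single global primitive of $\mu\zeta$. The question therefore reduces to showing that $\theta\otimes\mathfrak{l}$ is globally defined.

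\emph{Step 1: frame independence.} Two local adapted orthonormal frames $\{e_1,\ldots,e_{2n},\xi\}$ and $\{e'_1,\ldots,e'_{2n},\xi\}$ share the last vector $\xi$, since $\xi$ is globally determined. On $\mathcal{D}$ they are related by a $\U(n)$-valued transition function, because both are orthonormal and both are adapted to $J$. Let $U=(U^j{}_k)$ be the matrix expressing the change on the $(1,0)$-part. Since $\theta^j=e^{2j-1}+ie^{2j}$ annihilates $\mathcal{D}^{(0,1)}$ and $\xi$, the forms $\theta^j$ form the frame of $\mathcal{D}^{(1,0)*}$ dual to $2Z_j$. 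Hence the primed and unprimed one-forms are related by ${\theta'}^j=\sum_k (U^{-1})^j{}_k\,\theta^k$, or its transpose, and I will fix the convention carefully. It follows that
\[
\theta'={\theta'}^1\wedge\cdots\wedge{\theta'}^n=\det(U)^{-1}\,\theta .
\]
On the other side, the vectors $2Z_j=e_{2j-1}-ie_{2j}$ transform by $U$, so $\mathfrak{l}'=\det(U)\,\mathfrak{l}$ in $\wedge^n_\C\mathcal{D}^{(1,0)}$. The two determinant factors cancel, giving $\theta'\otimes\mathfrak{l}'=\theta\otimes\mathfrak{l}$. I expect this to be the main point needing care, and it is only bookkeeping: I must make sure that the dual transformation produces $\det U^{-1}$ and not $\overline{\det U}$. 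For unitary $U$ these two coincide, so the identity holds either way. In effect, $\theta\otimes\mathfrak{l}$ is the canonical element of $\wedge^n\mathcal{D}^{(1,0)*}\otimes\wedge^n\mathcal{D}^{(1,0)}\cong\C$, pulled back to $M$ through the projection $TM\to\mathcal{D}$ along $\xi$. This description is frame independent, so $\rho=e^f\,\theta\otimes\mathfrak{l}$ is a smooth global section of $\Omega^n_\C(M,\mathcal{L})$.

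\emph{Step 2: the Cartan property.} Locally, $\rho=(e^f\theta^1)\wedge\theta^2\wedge\cdots\wedge\theta^n\otimes\mathfrak{l}$, which is maximally decomposable. Using $g(e_a,e_b)=\delta_{ab}$ extended complex-bilinearly, we have
\[
\langle\theta^j,\theta^k\rangle=\delta_{jk}(1-1)=0
\]
for all $j,k$. So the components are isotropic and mutually orthogonal, and rescaling $\theta^1$ by $e^f$ preserves both properties. By the definition of a Cartan $n$-form, together with Lemma~\ref{lemma:squaresriemannian2n1}, $\rho$ is a Cartan $n$-form. It is nowhere vanishing since $e^f>0$, and its annihilator is $\mathcal{D}^{(0,1)}$ of dimension $n$, which is consistent with purity of the corresponding spinor.
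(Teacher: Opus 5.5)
Your proposal is correct and follows essentially the same route as the paper. Under a unitary change of adapted frame the determinant factors of $\theta$ and $\mathfrak{l}$ cancel; the paper writes the coframe factor as $\overline{\det U}$, which equals your $\det(U)^{-1}$ by unitarity. Then $e^f$ is global, and the isotropy and mutual orthogonality of the $\theta^j$ give the Cartan property.

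One harmless slip: $\theta^k(Z_j)=\delta_{jk}$, so the $\theta^j$ are dual to $Z_j$, not to $2Z_j$. This does not change the transformation law, but it means $\theta\otimes\mathfrak{l}$ is $2^n$ times the canonical element rather than the canonical element itself, consistent with the paper's normalization $2^{-n}\theta(\mathfrak{l})=1$.
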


\begin{proof}
Under a local change of the frame of $\mathcal{D}$ by a transition matrix $U \in \mathrm{U}(n)$ such that:
\begin{equation*}
\tilde{Z}_j = \sum_k U_{jk} Z_k    
\end{equation*}

\noindent
the top-degree vector field section transforms as $\tilde{\mathfrak{l}} = \det(U) \mathfrak{l}$. By duality and unitarity, the corresponding coframe forms transform under the complex-conjugate matrix, yielding $\tilde{\theta} = \det(\overline{U})\theta = \overline{\det(U)}\theta$. Their variations cancel out identically under the tensor product since $\overline{\det(U)}\det(U) = 1$. Because the conformal factor $e^f$ is a globally defined scalar function invariant under frame changes, $\rho$ is therefore a globally well-defined, locally decomposable nowhere vanishing $n$-form on $M$ taking values in $\mathcal{L}$. Finally, for every $j \in \{ 1, \dots, n\}$, the one-forms $\theta^j$ are isotropic and mutually orthogonal, so $\rho$ is a Cartan $n$-form.
\end{proof}

\begin{remark}
\label{remark:induced-structure}
A direct computation in the adapted frame shows that the almost contact metric structure induced by $\rho$, as in Lemma~\ref{lemma:varphi_square}, is the one we started from.
\end{remark}

\begin{lemma}
    \label{lem:identidad-algebraica}
    Let $\rho\in\Omega^n_\C(M,\mathcal{L})$ be a Cartan $n$-form and $(\varphi,\xi,\zeta)$ be its associated almost contact metric structure. Then, for every $X \in \Gamma (TM)$, we have:
    \begin{equation*}
        i^n \mu *(X^{\flat} \wedge \rho) = \mu\zeta(X)\rho - \mu(\zeta \wedge \iota_{X}\rho).
    \end{equation*}
\end{lemma}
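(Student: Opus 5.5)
The identity is purely algebraic and pointwise, and both sides are linear in $X$ (and in $\mu$, which simply factors out). The plan is to prove
\[
i^n *(X^\flat\wedge\rho)=\zeta(X)\rho-\zeta\wedge\iota_X\rho
\]
by splitting $X=\zeta(X)\xi+X_{\mathcal{D}}$ with $X_{\mathcal{D}}\in\mathcal{D}=\ker(\zeta)$. The two inputs are the algebraic facts established in the proof of Lemma~\ref{lemma:nabla_omega}: first, $\iota_\xi\rho=0$ (from the proof of Lemma~\ref{lemma:varphi_square}); second, $*\rho=(-i)^n\rho\wedge\zeta$, which followed from $\rho\wedge\overline{\rho}=(-i)^n\escal{\rho,\overline{\rho}}*\zeta$. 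Both use only the definition of $\zeta$ from $\rho$ and the Cartan form structure, not the Killing equation, so they are available here.

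We start from $*(X^\flat\wedge\rho)=(-1)^n\iota_X(*\rho)$ (degree of $\rho$ is $n$, dimension $2n+1$). Substituting $*\rho=(-i)^n\rho\wedge\zeta$ gives
\[
*(X^\flat\wedge\rho)=(-1)^n(-i)^n\,\iota_X(\rho\wedge\zeta)=i^n\big(\iota_X\rho\wedge\zeta+(-1)^n\zeta(X)\rho\big).
\]
Multiplying by $i^n$ yields
\[
i^n*(X^\flat\wedge\rho)=(-1)^n\iota_X\rho\wedge\zeta+\zeta(X)\rho .
\]
Since $\iota_X\rho$ has degree $n-1$, we have $\iota_X\rho\wedge\zeta=(-1)^{n-1}\zeta\wedge\iota_X\rho$, so the first term equals $-\zeta\wedge\iota_X\rho$. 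This is exactly the claim, and multiplying by $\mu$ finishes the argument. The computation does not even require splitting $X$.

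The only genuine obstacle is the sign bookkeeping. Two points need care. The first is the convention $*(\alpha\wedge\rho)=(-1)^{n}\iota_{\alpha^\sharp}*\rho$ for a one-form $\alpha$, which must be consistent with the conventions already used in Lemmas~\ref{lemma:nabla_zeta} and~\ref{lemma:nabla_omega}. The second is the identity $*\rho=(-i)^n\rho\wedge\zeta$ with the orientation $\nu$ fixed in this section. As a safeguard, I would verify the formula directly in the adapted frame with $\rho=e^f\theta^1\wedge\cdots\wedge\theta^n\otimes\mathfrak{l}$. For $X=\xi$, both sides reduce to $\rho$, using $\iota_\xi\rho=0$. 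For $X=e_1$, one checks $*(e^1\wedge\theta^1\wedge\cdots)$ against $-\zeta\wedge\iota_{e_1}\rho$ using $\nu=(-1)^{\binom n2}e^1\wedge\cdots\wedge e^{2n}\wedge\zeta$. This confirms that the factor $(-1)^{\binom n2}$ is precisely what makes $*\rho=(-i)^n\rho\wedge\zeta$ hold.
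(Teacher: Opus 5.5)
Your proof is correct and follows essentially the paper's route: both arguments rest on $*(X^\flat\wedge\rho)=(-1)^n\iota_X(*\rho)$ together with $*\rho=(-i)^n\rho\wedge\zeta$. The only difference is that the paper splits $X=X_{\mathcal{D}}+\zeta(X)\xi$ and treats the two pieces separately, using $\iota_\xi\rho=0$, whereas your single application of the graded Leibniz rule for $\iota_X$ handles both pieces at once; this is a harmless streamlining.
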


\begin{proof}
\noindent
Decomposing the vector field $X$ into its horizontal and Reeb components, $X = X_{\mathcal{D}} + \zeta(X)\xi$, we have the dual decomposition $X^{\flat} = X_{\mathcal{D}}^{\flat} + \zeta(X)\zeta$. Applying the Hodge star operator linearly, we get:
\begin{equation*}
*(X^{\flat} \wedge \rho) = *(X_{\mathcal{D}}^{\flat} \wedge \rho) + \zeta(X)*(\zeta \wedge \rho).
\end{equation*}

\noindent
With our choice of orientation $\nu$, we have that $i^n\rho\wedge\overline{\rho} = \escal{\rho,\overline{\rho}}*\zeta$. This implies $\rho \wedge \overline{\rho} = (-i)^n \escal{\rho,\overline{\rho}} *\zeta$, and therefore $\zeta \wedge \rho \wedge \overline{\rho} = (-i)^n \escal{\rho,\overline{\rho}} \nu$. By the complex-bilinear extension of the metric, any complex $n$-form $\beta$ satisfies $\beta \wedge *\overline{\rho} = \escal{\beta,\overline{\rho}}\nu$. Comparing this with $\rho \wedge \big(i^n\overline{\rho} \wedge \zeta\big) = \escal{\rho,\overline{\rho}} \nu$, we deduce that $*\overline{\rho} = i^n  \overline{\rho} \wedge \zeta$. Taking the complex conjugate, we obtain the identity $*\rho = (-i)^n \rho \wedge \zeta$.\medskip

\noindent
For the vertical component, we use the identity $*(\zeta \wedge \rho) = (-1)^n \iota_\xi(*\rho)$:
\begin{equation*}
*(\zeta \wedge \rho) = (-1)^n \iota_\xi((-i)^n \rho \wedge \zeta) = (-1)^n (-i)^n (-1)^n \rho = (-i)^n \rho.
\end{equation*}

\noindent
Multiplying by $i^n \mu \zeta(X)$, we obtain exactly:
\begin{equation*}
i^n \mu \zeta(X)*(\zeta\wedge\rho) = i^n(-i)^n\mu\zeta(X)\rho = \mu\zeta(X)\rho.
\end{equation*}

\noindent
For the horizontal component, since $X_{\mathcal{D}} \in \ker(\zeta)$, we use $*(X_{\mathcal{D}}^\flat \wedge \rho) = (-1)^n \iota_{X_{\mathcal{D}}}(*\rho)$:
\begin{equation*}
*(X_{\mathcal{D}}^{\flat} \wedge \rho) = (-1)^n \iota_{X_{\mathcal{D}}}((-i)^n \rho \wedge \zeta) = (-1)^n (-i)^n (\iota_{X_{\mathcal{D}}} \rho) \wedge \zeta.
\end{equation*}

\noindent
Using $(\iota_{X_{\mathcal{D}}} \rho) \wedge \zeta = (-1)^{n-1} \zeta \wedge \iota_{X_{\mathcal{D}}} \rho = -(-1)^n \zeta \wedge \iota_X \rho$, this simplifies to:
\begin{equation*}
*(X_{\mathcal{D}}^{\flat} \wedge \rho) = -(-1)^{2n}(-i)^n \zeta \wedge \iota_X \rho = -(-i)^n \zeta \wedge \iota_X \rho.
\end{equation*}

\noindent
Multiplying by $i^n \mu$, the horizontal term becomes exactly:
\begin{equation*}
i^n \mu *(X_{\mathcal{D}}^{\flat} \wedge \rho) = - i^n (-i)^n \mu (\zeta \wedge \iota_X \rho) = - \mu(\zeta\wedge\iota_X\rho).
\end{equation*}

\noindent
Combining the horizontal and vertical terms, we obtain exactly:
\begin{equation*}
i^n \mu *(X^{\flat} \wedge \rho) = \mu\zeta(X)\rho - \mu(\zeta \wedge \iota_{X}\rho),
\end{equation*}
concluding the proof.
\end{proof}

\noindent
Up to this point, apart from the underlying almost contact metric structure, we have used only the exactness of $\mu\zeta$. We now make use of the $\mu$-Kenmotsu structure equations to obtain the covariant derivative of $\rho$.\medskip

\noindent
Let $\mathcal{P}_{\mathcal{D}} \colon TM \to \mathcal{D}$ be the orthogonal projection onto the integrable distribution $\mathcal{D} = \ker(\zeta)$, defined explicitly by $\mathcal{P}_{\mathcal{D}}(X) := X - \zeta(X)\xi$ for every $X\in TM$. We define a connection $\nabla^{\mathcal{D}}$ on the real vector bundle $\mathcal{D} \to M$ by projecting the Levi-Civita connection $\nabla^g$:
\begin{equation*}
\nabla^{\mathcal{D}}_X Y := \mathcal{P}_{\mathcal{D}}(\nabla^g_X Y)
\end{equation*}

\noindent
for every $Y \in \Gamma(\mathcal{D})$ and $X\in \Gamma(TM)$. Since $(M,g)$ is a $\mu$-Kenmotsu manifold, the restricted endomorphism $J = -\varphi\vert_{\mathcal{D}}$ acts as a parallel complex structure with respect to $\nabla^{\mathcal{D}}$, meaning $\nabla^{\mathcal{D}}_X (JY) = J \nabla^{\mathcal{D}}_X Y$. Extending $\nabla^{\mathcal{D}}$ complex-linearly to the complexified bundle $\mathcal{D}_{\mathbb{C}}$, it preserves the splitting: 
\begin{equation*}
\mathcal{D}_{\mathbb{C}} = \mathcal{D}^{(1,0)} \oplus \mathcal{D}^{(0,1)}
\end{equation*}

\noindent
and restricts to a canonical connection on the subbundle $\mathcal{D}^{(1,0)}$. This connection naturally induces a connection $\nabla^{\mathrm{Det}}$ on the complex determinant line bundle $\mathcal{L} \cong \wedge^n_{\mathbb{C}} \mathcal{D}^{(1,0)}$, which acts on the local section $\mathfrak{l}$ via the trace of the horizontal connection forms:
\begin{equation*}
\nabla^{\mathrm{Det}}_X \mathfrak{l} = i \sum_{j=1}^n \varpi_{2j-1,2j}(X) \mathfrak{l},
\end{equation*}

\noindent
where $\varpi_{ab}(X):=g(\nabla^g_Xe_a,e_b)$. Therefore, the Hermitian connection $\nabla^A$ on $\mathcal{L}$ that we shall use to define the twisted connection $\nabla^{g,A}$ is defined strictly as the undeformed induced connection:
\begin{equation*}
\nabla^A_X := \nabla^{\mathrm{Det}}_X.
\end{equation*}

\noindent
We now compute the total covariant derivative of $\rho \in\Omega^n_\C(M,\mathcal{L})$ with respect to $\nabla^{g,A} = \nabla^g \otimes \nabla^A$. Locally we have $\rho = e^f \theta\otimes\mathfrak{l}$ with $\theta\in \Omega^n_{\C}(M)$, and by the Leibniz rule, we obtain:
\begin{equation*}
\nabla^{g,A}_X \rho = \dd(e^f)(X) \theta \otimes \mathfrak{l} + e^f (\nabla^g_X \theta) \otimes \mathfrak{l} + e^f \theta \otimes (\nabla^A_X \mathfrak{l}).
\end{equation*}

The $\mu$-Kenmotsu structure equations imply that the Levi-Civita connection mixed components satisfy:
\begin{equation*}
\varpi_{a, 2n+1}(X) = g(\nabla^g_X e_a, \xi) = -g(e_a, \nabla_X^g \xi) = -\mu g(e_a, X - \zeta(X)\xi) = -\mu e^a(X).
\end{equation*}

\noindent
For the complex one-forms $\theta^j$, the vertical projection yields the explicit relation:
\begin{equation*}
\nabla^g_X \theta^j = \mathcal{P}_{\mathcal{D}}(\nabla^g_X \theta^j)  - \mu \theta^j(X) \zeta.
\end{equation*}

\noindent
Extending this action to the full wedge product $\theta = \theta^1 \wedge \cdots \wedge \theta^n$, moving $\zeta$ to the front of the wedge product sequentially picks up alternating signs, so the vertical components cleanly add to yield:
\begin{equation*}
(\nabla^g_X \theta)_{\text{vert}} = -\mu \sum_{j=1}^n (-1)^{j-1} \theta^j(X)\zeta \wedge \theta^1 \wedge \cdots \wedge \widehat{\theta^j} \wedge \cdots \wedge \theta^n = -\mu \zeta \wedge \iota_X \theta,
\end{equation*}

\noindent
where the \emph{hat} indicates the omitted factor. Since $\mathfrak{l}=2^n Z_1 \wedge \cdots \wedge Z_n$ with $\{Z_j\}_{j=1}^n$ the $(1,0)$-vectors dual to the local coframe $\{\theta^j\}_{j=1}^n$, its covariant derivative only produces the horizontal trace contribution opposite to that of $\theta$. Evaluating $\nabla^A$ on the local section $\mathfrak{l}$ yields:
\begin{equation*}
\nabla^A_X \mathfrak{l} = \Big(i\sum_{j=1}^n \varpi_{2j-1, 2j}(X)\Big)\mathfrak{l}.
\end{equation*}

\noindent
When evaluating the total covariant derivative $\nabla^{g,A}_X \rho$, we substitute $\dd(e^f)(X) = \mu\zeta(X)e^f$. The local horizontal trace connection forms $i\sum_{j=1}^n \varpi_{2j-1, 2j}(X)$ from $\nabla^A_X \mathfrak{l}$ and $-i\sum_{j=1}^n \varpi_{2j-1, 2j}(X)$ from $\nabla^g_X \theta$ cancel out identically. Then we obtain:
\begin{align*}
\nabla^{g,A}_X\rho &= \mu\zeta(X)\rho + e^f(\nabla^g_X\theta)\otimes\mathfrak{l} + e^f\theta\otimes(\nabla^A_X\mathfrak{l})\\
&= \mu\zeta(X)\rho + e^f\Big(-i\sum_{j=1}^n\varpi_{2j-1,2j}(X)\theta - \mu\zeta\wedge\iota_X\theta\Big)\otimes\mathfrak{l} + e^f\theta\otimes\Big(i\sum_{j=1}^n\varpi_{2j-1,2j}(X)\Big)\mathfrak{l}\\
&= \mu\zeta(X)\rho - \mu(\zeta\wedge\iota_X\rho).
\end{align*}

\noindent
Finally, by Lemma~\ref{lem:identidad-algebraica}, the expression for $\nabla_{X}^{g,A}\rho$ in Proposition \ref{prop:KillingSpinorBilinear} matches the expression above, concluding the following result.

\begin{prop}
Let $(M,g)$ be a $(2n+1)$-dimensional exact $\mu$-Kenmotsu manifold. Then $(M,g)$ admits a pure spin-c Killing spinor with imaginary Killing function $i\mu$, where $\mu \in C^\infty(M, \mathbb{R})$.
\end{prop}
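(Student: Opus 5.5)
The plan is to assemble the ingredients constructed earlier in this section and then invoke Proposition~\ref{prop:KillingSpinorBilinear} in the ``if'' direction. Starting from the exact $\mu$-Kenmotsu structure $(\varphi,\xi,\zeta)$, we take the canonical spin-c structure of Remark~\ref{remark:canonical-spinc}. Its characteristic line bundle is identified with $\mathcal{L}\cong\wedge^n_{\mathbb{C}}\mathcal{D}^{(1,0)}$, and we use the Hermitian connection $A$ defined by $\nabla^A:=\nabla^{\mathrm{Det}}$. We fix the orientation $\nu$ chosen above. Since $\mu\zeta$ is exact, we choose $f$ with $\dd f=\mu\zeta$ and form $\rho=e^f\theta\otimes\mathfrak{l}$. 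By Lemma~\ref{lemma:rho-well-defined}, $\rho$ is a globally defined Cartan $n$-form. It is nowhere vanishing, and by Remark~\ref{remark:induced-structure} it induces the almost contact metric structure we started from.

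The key step is to check Equation~\eqref{eq:imaginary-killing-rho} for this $\rho$. We use the local computation just carried out. In $\nabla^{g,A}_X\rho$ there are three contributions:
\begin{itemize}
\item the $\dd(e^f)$ term, which gives $\mu\zeta(X)\rho$;
\item the vertical part of $\nabla^g\theta$, which is controlled by $\nabla^g\xi=\mu(\mathrm{Id}-\zeta\otimes\xi)$, a consequence of the Kenmotsu equation (contract \eqref{eq:Kenmotsu} with $\xi$), and gives $-\mu\zeta\wedge\iota_X\rho$;
\item the horizontal trace terms, which cancel against $\nabla^A\mathfrak{l}$.
\end{itemize}
Together these give $\nabla^{g,A}_X\rho=\mu\zeta(X)\rho-\mu\,\zeta\wedge\iota_X\rho$. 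Lemma~\ref{lem:identidad-algebraica}, applied with the orientation $\nu$, shows that the right-hand side equals $i^n\mu*(X^\flat\wedge\rho)$. This is exactly \eqref{eq:imaginary-killing-rho}.

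Proposition~\ref{prop:KillingSpinorBilinear} then yields a spinor $\eta$ with $\nabla^{g,A}_X\eta=\frac{\mu}{2}X\cdot\eta$ whose complex-bilinear square is $\rho$. By Lemma~\ref{lemma:squaresriemannian2n1}, $\eta$ is a pure irreducible complex spinor, and it is nowhere vanishing because $\rho$ is.

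The step I expect to be the main obstacle is justifying that the horizontal part of $\nabla^g\theta$ is purely a multiple $-i\sum_j\varpi_{2j-1,2j}(X)\,\theta$ of $\theta$. This requires that $\nabla^{\mathcal{D}}$ preserve $J$, so that the $\mathfrak{u}(n)$-valued connection matrix has no $\mathcal{D}^{(0,1)}$-mixing. To verify it, I would project the Kenmotsu identity onto $\mathcal{D}$: for $Y\in\Gamma(\mathcal{D})$, the right-hand side of \eqref{eq:Kenmotsu} is proportional to $\xi$, so $\mathcal{P}_{\mathcal{D}}(\nabla^g_X\varphi)Y=0$. Only the trace of that matrix survives in the top wedge $\theta$, and it is exactly cancelled by the dual action on $\mathfrak{l}$.

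A second point to check is that the line bundle $\mathcal{L}$ together with $\nabla^A$ is genuinely the characteristic data of the canonical spin-c structure, up to the conjugation conventions, so that Proposition~\ref{prop:KillingSpinorBilinear} applies.
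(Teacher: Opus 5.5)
Your proposal is correct and follows the paper's proof essentially step by step. It uses the same $\rho=e^f\theta\otimes\mathfrak{l}$ with $\nabla^A=\nabla^{\mathrm{Det}}$, the same three-term Leibniz computation giving $\nabla^{g,A}_X\rho=\mu\zeta(X)\rho-\mu\,\zeta\wedge\iota_X\rho$, and the same conclusion via Lemma~\ref{lem:identidad-algebraica} and Proposition~\ref{prop:KillingSpinorBilinear}. Your explicit check that $\mathcal{P}_{\mathcal{D}}\big((\nabla^g_X\varphi)Y\big)=0$ for $Y\in\Gamma(\mathcal{D})$ is a useful added detail: it gives the $\nabla^{\mathcal{D}}$-parallelism of $J$ needed for the trace cancellation, which the paper only asserts.
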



\section{Metric connection interpretation}
\label{sec:metric-connection-interpretation}


In this section, we reinterpret Theorem~\ref{thm:maintheorem} in terms of metric connections with vectorial torsion, following~\cite{C2026}. Let $(M,g,\varphi,\xi,\zeta)$ be an almost contact metric manifold and $\mu\in C^\infty(M,\R)$. We consider the metric connection $\hat\nabla$ with vectorial torsion $\mu\zeta$, determined by:
\begin{equation}
\label{eq:vectorial-connection}
    \hat\nabla_XY := \nabla^g_XY - \mu(\zeta(Y)X-g(X,Y)\xi)
\end{equation}

\noindent
for all $X$, $Y \in \Gamma(TM)$. Its difference tensor with the Levi-Civita connection, $S_X := \hat\nabla_X - \nabla^g_X$, satisfies $g(S_XY,Z)=-g(Y,S_XZ)$ and hence takes values in $\mathfrak{so}(TM)$. Therefore, $\hat\nabla g=0$ and $\hat\nabla\nu=0$. Extended as a derivation, $S_X$ acts on every $k$-form $\beta$ by:
\begin{equation}
\label{eq:difference-forms}
    S_X\beta = \mu (\zeta \wedge \iota_X \beta - X^\flat \wedge \iota_\xi \beta).
\end{equation}

\begin{lemma}
\label{lemma:vectorial-Kenmotsu}
An almost contact metric manifold $(M,g,\varphi,\xi,\zeta)$ is $\mu$-Kenmotsu if and only if $\hat\nabla\varphi=0$, where $\hat\nabla$ is the connection in Equation~\eqref{eq:vectorial-connection} associated with $\mu$.
\end{lemma}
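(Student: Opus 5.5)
The plan is a direct tensorial computation: express $\hat\nabla\varphi$ through $\nabla^g\varphi$ and the difference tensor $S_X$, and compare with the right-hand side of Equation~\eqref{eq:Kenmotsu}. Since $\hat\nabla_X = \nabla^g_X + S_X$ and $S_X$ acts on endomorphisms as a derivation, we have
\begin{equation*}
(\hat\nabla_X\varphi)Y = (\nabla^g_X\varphi)Y + S_X(\varphi Y) - \varphi(S_XY).
\end{equation*}
Hence $\hat\nabla\varphi=0$ is equivalent to $(\nabla^g_X\varphi)Y = \varphi(S_XY) - S_X(\varphi Y)$ for all $X,Y$. It therefore suffices to show that
\begin{equation*}
\varphi(S_XY)-S_X(\varphi Y)=\mu\big(g(\varphi(X),Y)\xi-\zeta(Y)\varphi(X)\big),
\end{equation*}
and both implications follow at once.

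To verify this identity, I use $S_XY=-\mu(\zeta(Y)X-g(X,Y)\xi)$, read off from Equation~\eqref{eq:vectorial-connection}, together with the standard almost contact identities $\varphi\xi=0$ and $\zeta\circ\varphi=0$. The latter follows from $g(\varphi X,\xi)=-g(X,\varphi\xi)=0$, and $\varphi\xi=0$ follows from $\varphi^2=-\mathrm{Id}+\zeta\otimes\xi$ by the usual argument. Then
\begin{equation*}
\varphi(S_XY)=-\mu\zeta(Y)\varphi(X)+\mu g(X,Y)\varphi\xi=-\mu\zeta(Y)\varphi(X),
\end{equation*}
and
\begin{equation*}
S_X(\varphi Y)=-\mu\big(\zeta(\varphi Y)X-g(X,\varphi Y)\xi\big)=\mu g(X,\varphi Y)\xi=-\mu g(\varphi X,Y)\xi.
\end{equation*}
Subtracting gives $\varphi(S_XY)-S_X(\varphi Y)=\mu\big(g(\varphi X,Y)\xi-\zeta(Y)\varphi X\big)$, which is exactly the right-hand side of Equation~\eqref{eq:Kenmotsu}. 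This proves both directions simultaneously.

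The only delicate points are bookkeeping. First, the sign convention for how a derivation acts on $\mathrm{End}(TM)$ must be applied correctly. Second, the identity $\varphi\xi=0$ must be justified from the axioms of an almost contact metric structure; this is the standard fact that $\varphi^2\xi=0$ forces $\varphi\xi\in\ker\varphi\cap\xi^\perp$, and that intersection is zero because $\varphi^2=-\mathrm{Id}$ on $\xi^\perp$. No integrability or exactness assumption enters, so the lemma is purely algebraic once $S$ is written down.
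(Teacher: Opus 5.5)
Your proposal is correct and follows the same route as the paper. The paper's proof is exactly the direct computation $(\hat\nabla_X\varphi)Y=(\nabla^g_X\varphi)Y+\mu\big(g(X,\varphi(Y))\xi+\zeta(Y)\varphi(X)\big)$, which your expression $S_X(\varphi Y)-\varphi(S_XY)$ reproduces, followed by the same comparison with Equation~\eqref{eq:Kenmotsu}; you simply make explicit the almost contact identities $\varphi\xi=0$ and $\zeta\circ\varphi=0$ that the paper uses without comment.
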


\begin{proof}
For $X$, $Y\in \Gamma(TM)$, a direct computation using
\eqref{eq:vectorial-connection} yields:
\begin{equation*}
    (\hat\nabla_X\varphi)Y=(\nabla^g_X\varphi)Y+\mu\big(g(X,\varphi (Y))\xi+\zeta(Y)\varphi(X)\big).
\end{equation*}

\noindent
Therefore $\hat\nabla\varphi = 0$ is equivalent to Equation~\eqref{eq:Kenmotsu}.
\end{proof}

Furthermore, when $\hat\nabla\varphi = 0$ we obtain $\hat\nabla\xi = 0$, since $\hat\nabla_X\xi$ lies in $\ker(\varphi)=\R\xi$ and is orthogonal to the unit vector field $\xi$, and consequently $\hat\nabla\zeta = 0$.\medskip

\noindent
In~\cite[Thm.~5.1]{C2026} it is shown that if $\eta\in\Gamma(S)$ is a spin-c Killing spinor satisfying $g(V_\eta,V_\eta) = \scS (\eta,\eta)^2$, where $V_\eta\in\Gamma(TM)$ is the Dirac current defined by $g(V_\eta, X) := \scS(X \cdot \eta, \eta)$, then the normalized spinor $\eta/\sqrt{\scS(\eta,\eta)}$ is parallel with respect to $\hat\nabla^A := \hat\nabla \otimes \nabla^A$. We now prove the analogous statement for Cartan $n$-forms. We first observe that the normalization $\rho_0:=\escal{\rho,\overline{\rho}}^{-1/2}\rho$ of a Cartan $n$-form $\rho$ is again a Cartan $n$-form. By Section~\ref{sec:onlyif}, $\rho$ and $\rho_0$ induce the same almost contact metric structure $(\varphi, \xi, \zeta)$.

\begin{prop}
\label{prop:killing-vs-parallel}
Let $(M,g)$ be a $(2n+1)$-dimensional oriented Riemannian manifold equipped with a Hermitian complex line bundle $\mathcal{L}$ with Hermitian connection $\nabla^A$, and let $\mu\in C^\infty(M,\R)$. Then the following two statements are equivalent:
\begin{enumerate}
    \item[\normalfont(1)] There exists a Cartan $n$-form $\rho\in\Omega^n_\C(M,\mathcal{L})$ satisfying $\nabla^{g,A}_X\rho=i^n\mu*(X^\flat\wedge\rho)$ for every $X\in\Gamma(TM)$.
    \item[\normalfont(2)] There exists a $\hat\nabla^A$-parallel and unit Cartan $n$-form $\rho_0\in\Omega^n_\C(M,\mathcal{L})$ whose associated one-form $\zeta:=i^n*(\rho_0\wedge\overline{\rho}_0)$ satisfies that $\mu\zeta$ is exact.
\end{enumerate}
\end{prop}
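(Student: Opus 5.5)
The key algebraic input is Lemma~\ref{lem:identidad-algebraica}, which holds for any Cartan $n$-form:
\begin{equation*}
i^n\mu*(X^\flat\wedge\rho)=\mu\zeta(X)\rho-\mu\,\zeta\wedge\iota_X\rho .
\end{equation*}
Combined with the difference tensor \eqref{eq:difference-forms}, this makes both directions short computations. Since $\iota_\xi\rho=0$ (shown in the proof of Lemma~\ref{lemma:varphi_square}), the second term of $S_X\rho$ vanishes. Hence for every Cartan $n$-form,
\begin{equation*}
\hat\nabla^A_X\rho=\nabla^{g,A}_X\rho+\mu\,\zeta\wedge\iota_X\rho .
\end{equation*}
Therefore Equation~\eqref{eq:imaginary-killing-rho} is equivalent to
\begin{equation*}
\hat\nabla^A_X\rho=\mu\zeta(X)\rho \quad\text{for all } X .
\end{equation*}
This reformulation is the first step, and it does all the real work.

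For (1)$\Rightarrow$(2), I would set $u:=\escal{\rho,\overline\rho}$ and $\rho_0:=u^{-1/2}\rho$. By Remark~\ref{rmk:mu_exact}, $\dd\log u=2\mu\zeta$, so $\mu\zeta$ is exact, and $\rho_0$ induces the same $\zeta$. The Leibniz rule then gives
\begin{equation*}
\hat\nabla^A_X\rho_0=u^{-1/2}\mu\zeta(X)\rho-\tfrac12 u^{-1/2}\,\dd\log u(X)\,\rho=0 .
\end{equation*}
Here the only check is that $\hat\nabla^A$ and $\nabla^{g,A}$ agree on functions, which is trivial. The rescaling preserves being a Cartan $n$-form, and $\rho_0$ is unit.

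For (2)$\Rightarrow$(1), I would choose $f$ with $\dd f=\mu\zeta$ and set $\rho:=e^f\rho_0$. This is again a Cartan $n$-form with the same associated structure. Then
\begin{equation*}
\hat\nabla^A_X\rho=\dd f(X)\,\rho=\mu\zeta(X)\rho .
\end{equation*}
By the equivalence of the first step, $\rho$ satisfies (1).

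The point needing care is the identification of $\zeta$ in (2) with the one-form of Section~\ref{sec:onlyif}, including its normalization. For unit $\rho_0$ the two definitions coincide, but the orientation convention relating $*$ to $\zeta$ must be the one used in Lemma~\ref{lem:identidad-algebraica}, namely $*\rho=(-i)^n\rho\wedge\zeta$. That identity was derived there purely from the definition of $\zeta$ in Section~\ref{sec:onlyif}, so it applies to any Cartan $n$-form. I should also verify that \eqref{eq:difference-forms} is applied with the correct sign convention for the derivation extension to forms.

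Notably, neither direction uses $\hat\nabla\varphi=0$ or the $\mu$-Kenmotsu condition; only the algebraic identity and exactness enter. I therefore do not expect a genuine obstacle beyond this sign bookkeeping.
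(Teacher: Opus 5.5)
Your proposal is correct and follows the paper's proof essentially step by step. It uses the same reformulation $\hat\nabla^A_X\rho=\mu\zeta(X)\rho$, obtained from Equation~\eqref{eq:difference-forms} together with Lemma~\ref{lem:identidad-algebraica}, followed by normalization by $u^{-1/2}$ in one direction and rescaling by $e^f$ in the other. The one difference is how you get exactness of $\mu\zeta$. The paper does not invoke Remark~\ref{rmk:mu_exact}; it derives $X(u)=2\mu\zeta(X)u$ in one line from $\hat\nabla^A_X\rho=\mu\zeta(X)\rho$ and the metric/Hermitian compatibility of $\hat\nabla^A$. This keeps the second proof independent of the longer computation in Lemma~\ref{lemma:nabla_zeta}. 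Your citation is still legitimate, since that lemma's proof uses only that $\rho$ is a Cartan $n$-form solving the equation.
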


\begin{proof}
Let $\rho\in\Omega^n_\C(M,\mathcal{L})$ be an arbitrary Cartan $n$-form and let $(\varphi,\xi,\zeta)$ be the almost contact metric structure it induces, see Lemma~\ref{lemma:varphi_square}, which together with $\mu$ determines the connection $\hat\nabla$ of Equation~\eqref{eq:vectorial-connection}. Since $\iota_\xi\rho=0$ by Equation~\eqref{eq:contraccion-xi}, we apply Equation~\eqref{eq:difference-forms} to $\rho$ and obtain:
\begin{equation*}
\hat\nabla^A_X\rho=\nabla^{g,A}_X\rho+\mu\,\zeta\wedge\iota_X\rho
\end{equation*}

\noindent
for all $X\in\Gamma(TM)$. Combining this with Lemma~\ref{lem:identidad-algebraica}, we have:
\begin{equation*}
    \nabla^{g,A}_X\rho-i^n\mu*(X^\flat\wedge\rho)=\hat\nabla^A_X\rho-\mu\zeta(X)\rho.
\end{equation*}

\noindent
Hence, $\rho$ satisfies the imaginary Killing equation if and only if $\rho$ satisfies $\hat\nabla^A_X\rho=\mu\zeta(X)\rho$.\medskip

\noindent
Assume the first item and set $u:=\escal{\rho,\overline{\rho}}$. Then, for all $X\in \Gamma(TM)$ we have:
\begin{equation*}
    X(u) = \hat\nabla_X^A (\escal{\rho, \overline{\rho}}) = \escal{\hat\nabla_X^A \rho, \overline{\rho}} + \escal{\rho,\hat\nabla_X^A \overline{\rho}} = 2\mu \zeta(X) u.
\end{equation*}
In particular, $2\mu\zeta = \dd(\log u)$, so $\mu\zeta$ is exact. A direct computation then gives $\hat\nabla^A\rho_0 = 0$ for $\rho_0 := u^{-1/2} \rho$.\medskip

\noindent
Conversely, assume now the second item and set $\rho := e^{f} \rho_0$, where $f$ is a function such that $\dd f = \mu\zeta$. A direct computation gives $\hat\nabla^A_X \rho = X(f) \rho = \mu \zeta(X) \rho$. Equivalently, $\rho$ satisfies $\nabla^{g,A}_X\rho=i^n\mu*(X^\flat\wedge\rho)$.
\end{proof}

\noindent
The metric connection formalism developed in this section yields an alternative proof of Theorem~\ref{thm:maintheorem}. It relies only on the algebraic content of Lemmas~\ref{lemma:varphi_square}, \ref{lemma:rho-well-defined}, and~\ref{lem:identidad-algebraica}. In particular, it avoids the computation of $\nabla^g\zeta$ and $\nabla^g\omega$ (Lemmas~\ref{lemma:nabla_zeta} and~\ref{lemma:nabla_omega}), as well as that of $\nabla^{g,A}\rho$ carried out in Section~\ref{sec:if}.

\begin{proof}[Proof II of Theorem~\ref{thm:maintheorem}]
Assume that $(M,g)$ admits a pure spin-c Killing spinor with imaginary Killing function $i\mu$, and let $\rho\in\Omega^n_\C(M,\mathcal{L})$ be its complex-bilinear square, which satisfies $\nabla^{g,A}_X\rho=i^n\mu*(X^\flat\wedge\rho)$ by Proposition~\ref{prop:KillingSpinorBilinear}. By Proposition~\ref{prop:killing-vs-parallel}, the one-form $\mu\zeta$ is exact and the normalization $\rho_0:=\escal{\rho,\overline{\rho}}^{-1/2}\rho$ is $\hat\nabla^A$-parallel. Furthermore, both $\rho$ and $\rho_0$ induce the same almost contact metric structure $(\varphi,\xi,\zeta)$. Since $\rho_0$ is $\hat\nabla^A$-parallel, $\hat\nabla$ is metric and $\nabla^A$ is Hermitian, the tensors:
\begin{equation*}
    \zeta= i^n *(\rho_0\wedge\overline{\rho}_0),\qquad \omega= -i \rho_0\triangle_{n-1}\overline{\rho}_0,\qquad \varphi(X)=(\iota_X\omega)^\sharp
\end{equation*}

\noindent
are $\hat\nabla$-parallel. Hence $\hat\nabla\zeta=0$ and $\hat\nabla\varphi=0$. Lemma~\ref{lemma:vectorial-Kenmotsu} then shows that $(M,g,\varphi,\xi,\zeta)$ is $\mu$-Kenmotsu, and it is exact by the first paragraph. Note that, by Equation~\eqref{eq:difference-forms}, the difference tensor acts on $\zeta$ by $S_X\zeta=\mu(\zeta(X)\zeta-X^\flat)$, so that $\hat\nabla\zeta=0$ is precisely the identity obtained in Lemma~\ref{lemma:nabla_zeta}.\medskip

\noindent
Conversely, let $(M,g,\varphi,\xi,\zeta)$ be an exact $\mu$-Kenmotsu manifold, equipped with its canonical spin-c structure with characteristic line bundle $\mathcal{L}\cong\wedge^n_{\mathbb{C}}\mathcal{D}^{(1,0)}$, see Remark~\ref{remark:canonical-spinc} and the discussion above it. By Lemma~\ref{lemma:vectorial-Kenmotsu} and Equation~\eqref{eq:vectorial-connection}, $\hat\nabla$ preserves $g$, $\varphi$, and $\zeta$, hence also $\mathcal{D}^{(1,0)}$. Therefore $\hat{\nabla}$ induces a connection $\nabla^A$ on the complex line bundle $\mathcal{L}$.\medskip

\noindent
In the notation of Section~\ref{sec:if}, let $\rho_0:=2^{-n}\theta\otimes\mathfrak l$ which, by Lemma~\ref{lemma:rho-well-defined}, is a globally defined and unit Cartan $n$-form. Moreover, $\rho_0$ induces the original almost contact metric structure $(\varphi,\xi,\zeta)$. Since $\nabla^A$ is induced by $\hat\nabla$, the connection $\hat\nabla^A$ is the one induced by $\hat\nabla$ on $\Omega^n_\C(M,\mathcal{L})$. Let $X\in\Gamma(TM)$. We use that $\mathcal{L}$ is a complex line bundle and we obtain that (locally) there exists a complex one-form $a$ such that $\nabla^A_X \mathfrak l = a(X)\mathfrak l$. Furthermore, using that $2^{-n}\theta (\mathfrak l) = 1$, we have that $\hat\nabla_X\theta = -a(X)\theta$ and, therefore, $\hat\nabla^A \rho_0=0$. Since $\mu\zeta$ is exact, Proposition~\ref{prop:killing-vs-parallel} yields a Cartan $n$-form satisfying the equation of Proposition~\ref{prop:KillingSpinorBilinear}, which therefore corresponds to the required pure spin-c Killing spinor.
\end{proof}


\section{The global structure}
\label{sec:global}


We have proven that pure spin-c Killing spinors with imaginary Killing function $i\mu$, where $\mu \in C^{\infty}(M,\R)$, correspond to exact $\mu$-Kenmotsu structures $(g,\varphi,\xi,\zeta)$. By the properties of $\mu$-Kenmotsu manifolds, the one-form $\mu\zeta$ is closed if and only if $\dd\mu\wedge\zeta=0$. This condition is equivalent to:
\begin{equation*}
\dd\mu = k\zeta
\end{equation*}

\noindent
for a unique function $k\in C^{\infty}(M,\R)$. Evaluating this equation on $\xi$, we obtain $k=\xi(\mu)$. On the other hand, in the previous section, we introduced the function $f\in C^{\infty}(M,\R)$ via the relation $\dd f = \mu\zeta$, which is consistent thanks to the exact condition on $(g,\varphi,\xi,\zeta)$. Hence $\mu = \xi(f)$ and $k=\xi(\xi(f))$.\medskip

\noindent
The function $f$ already gives us a first consequence for exact $\mu$-Kenmotsu manifolds. We can consider the conformal change $\tilde g = e^{-2f} g$ and the rescaled almost contact structure $(\tilde\varphi,\tilde\xi,\tilde\zeta) := (\varphi,e^f\xi,e^{-f}\zeta)$. The Levi-Civita connection $\nabla^{\tilde g}$ of $\tilde g$ is given by:
\begin{equation*}
    \nabla^{\tilde g}_XY = \nabla^g_XY - \mu\zeta(Y)X + \mu g(X,Y)\xi - \mu\zeta(X)Y
\end{equation*}

\noindent
for all $X$, $Y\in\Gamma(TM)$. In particular, $\nabla^{\tilde g} = \hat\nabla - \mu\zeta \otimes \mathrm{Id}$. Thus, since $\hat\nabla\tilde\varphi = 0$ and $\mathrm{Id}$ acts (as a derivation) trivially on endomorphisms, we have that $\nabla^{\tilde g}\tilde\varphi = 0$. Consequently, every exact $\mu$-Kenmotsu manifold is globally conformally co-Kähler, that is, $\nabla^{\tilde g}\tilde\varphi=0$, $\nabla^{\tilde g}\tilde\xi =0$, and $\nabla^{\tilde g}\tilde\zeta=0$. If we drop this exactness hypothesis and we consider only $\mu$-Kenmotsu manifolds with $\mu\zeta$ closed, then we obtain that $(M,g)$ is locally conformally co-Kähler. These manifolds were described in~\cite[Cor.~5.1]{C2026*}. We now analyze exact $\mu$-Kenmotsu manifolds in terms of $f$, its derivatives $\mu$ and $k$, and the set of zeros of $\mu$.\medskip

Since $\dd f(X) = \mu \zeta (X) = 0 $ for every $X\in \Gamma(\cD)$, it follows that $f$ and all its derivatives with respect to $\xi$ are constant along the leaves of the foliation $\cF$ integrating $\cD = \mathrm{Ker}(\zeta)$, which are necessarily totally umbilical. The foliated structure of $(M,g,\varphi,\xi,\zeta)$ determined by the integrable distribution $\cD$ can be refined by exploiting $f\colon M\to \mathbb{R}$, whose critical points correspond with the zeroes of $\mu$. We define:
\begin{equation*}
Z_\mu:=\{p\in M\mid \mu(p)=0\}.
\end{equation*}

If $Z_{\mu}$ is empty, then the leaves of $\cF$ correspond precisely with the level sets of $f \colon M \to \mathbb{R}$, which defines a smooth submersion. If $Z_{\mu}$ is not empty, then it is a union of leaves of $\cF$. Since:
\begin{equation*}
g(\nabla_X^g Y, \xi) = -g(Y, \nabla_X^g \xi)= -g(Y,  \mu (X - \zeta(X)\xi)) = -\mu g(X,Y)
\end{equation*}

for every $X$, $Y\in \Gamma(\cD)$, it follows that the leaves conforming $Z_{\mu}$ are not only totally umbilical but totally geodesic. To understand how the manifold transitions through these critical leaves, we compute the Hessian of $f$ at a point $p \in Z_\mu$. Taking the covariant derivative of $\dd f = \mu\zeta$ at $p\in Z_{\mu}$, we obtain:
\begin{align*}
\mathrm{Hess}(f)_p (X,Y) &= (\nabla_X^g(\mu\zeta))(Y)\vert_p = X(\mu)\zeta(Y)\vert_p + \mu(p)(\nabla_X^g \zeta)(Y) \vert_p \\
&= \dd\mu (X) \zeta(Y)\vert_p  = k(p) \zeta_p (X) \zeta_p (Y).
\end{align*}

Therefore:
\begin{equation*}
\mathrm{Hess}(f)_p = k(p) \, \zeta_p \otimes \zeta_p = \xi(\xi(f))\vert_p\, \zeta_p \otimes \zeta_p.
\end{equation*}

From now on we assume that $\xi(\xi(f))\neq 0$ on $Z_{\mu}$, that is, the \emph{velocity} of $f$ crosses its zeroes with non-zero acceleration, then $f$ is a Morse-Bott function. Using this Morse-Bott function, we can characterize the global structure of $(M,g)$ in terms of \emph{strata} as follows:
\begin{enumerate}
\item \textit{Regular strata.} The open subset $M \setminus Z_\mu$ decomposes into connected components on which $\mu$ is nowhere vanishing. On each connected component $\mathcal{U} \subset M \setminus Z_\mu$, the restriction $f\colon \cU \to \mathbb{R}$ is a smooth submersion without critical points. Since $\nabla_{\xi}^g \xi = \mu(\xi-\zeta(\xi)\xi) = 0$, the integral curves of the Reeb vector field $\xi$ are unit-speed geodesics orthogonal to the leaves of $\mathcal{F}$. Moreover, since $\dd \zeta = 0$, the flow of $\xi$ preserves the distribution $\mathcal{D}$, providing a foliation-preserving local (or global, under completeness assumptions) diffeomorphism $\mathcal{U} \cong I \times F$, where $I \subset \mathbb{R}$ is an open interval and $F$ is a representative leaf. Using the Lie derivative:
\begin{equation*}
\mathcal{L}_\xi(g(X,Y)) = g(\nabla_X^g \xi, Y) + g(X, \nabla_Y^g \xi) = 2\mu g(X,Y)
\end{equation*}

\noindent
for $X$, $Y \in \Gamma(\mathcal{D})$, the metric on $\mathcal{U}$ takes the local warped product form:
\begin{equation*}
g\vert_{\mathcal{U}} = \dd t^2 + e^{2f(t)} h_F,
\end{equation*}

\noindent
where $t$ is the flow parameter along $\xi$ such that $\dd t = \zeta$, $f'(t) = \mu(t)$, and $h_F$ is a $t$-independent Riemannian metric on $F$. By the properties of the $\mu$-Kenmotsu structure, $(F,h_F)$ inherits a canonical transverse Kähler structure defined by $J = -\varphi\vert_{\mathcal{D}}$.

\item \textit{Critical strata.} The connected components of the critical locus $Z_\mu$ are closed, codimension-one, totally geodesic leaves of the foliation $\mathcal{F}$. The integral curves of the Reeb vector field $\xi$ flow transversally through $Z_\mu$. Along a flow line $\gamma(t)$, the function $\mu(t) = f'(t)$ transitions transversally through zero because $\mu'(t) = k(\gamma(t)) \neq 0$. Thus, $f$ attains a local minimum (if $k > 0$) or local maximum (if $k < 0$) in the transverse direction. Each critical leaf $L \subset Z_\mu$ is therefore a cylindrical boundary gluing together two distinct regular strata: one where the leaves of $\mathcal{F}$ are expanding ($\mu > 0$) and one where they are contracting ($\mu < 0$).
\end{enumerate}

\noindent
The manifold $(M,g)$ decomposes into a sequence of expanding and contracting Kähler cylinders, smoothly glued together along the totally geodesic \emph{equators} corresponding to the critical leaves in $Z_\mu$. This provides a complete description of exact $\mu$-Kenmotsu manifolds (and thus, of odd-dimensional manifolds admitting a pure spin-c Killing spinor with an imaginary Killing function) under the assumption of non-zero acceleration of $\mu$ on its zero set. In addition, if $M$ is compact, the smooth function $f\colon M \to \mathbb{R}$ must attain a global maximum and a global minimum. Consequently, $Z_\mu$ cannot be empty and must contain at least two disjoint totally geodesic leaves. This implies that a compact odd-dimensional Riemannian manifold cannot admit an exact $\mu$-Kenmotsu structure such that the zero set of $\mu$ has a single connected component. We finish this section with a family of compact, exact $\mu$-Kenmotsu manifolds.

\begin{example}
Let $(F,h,J)$ be a compact connected Kähler manifold and let $\mathbb{S}^1:=\R/2\pi\Z$. Then the warped product:
\begin{equation*}
(M,g):=(\mathbb{S}^1 \times F, \dd t^2 + e^{2\sin(t)} h)
\end{equation*}

\noindent
is a compact manifold. Furthermore, $(M,g)$ can be equipped with the almost contact metric structure defined by $\xi= \frac{\partial}{\partial t}$, $\zeta=\mathrm{d}t$, $\varphi|_{TF}=J$, and $\varphi(\xi)=0$. By the formulas for the warped product structures, we conclude that $(M,g,\varphi,\xi,\zeta)$ is a compact $\mu$-Kenmotsu manifold, where $\mu(t) = \cos(t)$. In addition, although $\dd t$ is closed and non-exact on $\mathbb{S}^1$, we have that the form $\mu\zeta = \cos (t) \dd t = \dd (\sin (t))$ is exact. Its zero set:
\begin{equation*}
Z_{\mu} = \{t= \pi/2 \} \cup \{ t= 3\pi/2 \}
\end{equation*}

\noindent
has two connected components, showing that the bound above is sharp. Finally, by Theorem~\ref{thm:maintheorem}, $(M,g)$ is a compact Riemannian manifold admitting a pure spin-c Killing spinor with imaginary Killing function $i\mu$.
\end{example}


\bibliographystyle{myamsplain}
\bibliography{biblio}

\end{document}

%% file: Preamble.tex
\usepackage[a4paper,
left=1in,right=1in,top=1.2in,bottom=1.2in,marginparsep=0.3cm,marginparwidth=2cm,%
footskip=.25in]{geometry}

\usepackage[textsize=footnotesize]{todonotes}

\usepackage{bm}
\usepackage{latexsym, amsmath, amstext, amssymb, amsfonts, amscd, bm, array, multirow, amsbsy, mathrsfs, stmaryrd}
\usepackage{amsthm}
\usepackage{t1enc}
\usepackage[mathscr]{eucal}
\usepackage{indentfirst}
\usepackage{pb-diagram}
\usepackage{graphicx}
\usepackage{fancyhdr}
\usepackage{fancybox}
\usepackage{color}
\usepackage{tikz-cd}
\usetikzlibrary{arrows}
\usepackage[all]{xy}
\usepackage{hyperref}
\usepackage{tikz}
\usepackage{xparse}
\hypersetup{colorlinks=false,pdfborderstyle={/S/U/W 0}}
\usetikzlibrary{matrix}
\usepackage{upgreek}
\usepackage[utf8]{inputenc}
\usepackage[T1]{fontenc}
\usepackage{bbm}
\usepackage{enumitem}
\setlist[enumerate]{leftmargin=*,noitemsep}
\setlist[itemize]{leftmargin=*,noitemsep}
\usepackage{float}
\usepackage{soul}

\newcommand{\doi}[1]{\url{https://doi.org/#1}}
\usepackage[noadjust]{cite}

\makeatletter
\def\@defaultbiblabelstyle#1{[#1]}
\makeatother

\usepackage{url}
\theoremstyle{plain}
\newtheorem{thm}{Theorem}[section]

\newtheorem{prop}[thm]{Proposition}

\newtheorem{lemma}[thm]{Lemma}

\theoremstyle{definition}
\newtheorem{definition}[thm]{Definition}

\theoremstyle{remark}
\newtheorem{remark}[thm]{Remark}
\newtheorem{example}[thm]{Example}

\newtheorem*{ack}{Acknowledgements}

\DeclareFontFamily{U}{rsf}{}
\DeclareFontShape{U}{rsf}{m}{n}{<5> <6> rsfs5 <7> <8> <9> rsfs7 <10-> rsfs10}{}
\DeclareMathAlphabet\Scr{U}{rsf}{m}{n}

\def\Z{\mathbb{Z}}
\def\C{\mathbb{C}}
\def\R{\mathbb{R}}

\def\dd{\mathrm{d}}

\def\Id{\mathrm{Id}}

\newcommand{\be}{\begin{equation*}}
\newcommand{\ee}{\end{equation*}}
\newcommand{\ben}{\begin{equation}}
\newcommand{\een}{\end{equation}}
\newcommand{\beqa}{\begin{eqnarray*}}
\newcommand{\eeqa}{\end{eqnarray*}}
\newcommand{\beqan}{\begin{eqnarray}}
\newcommand{\eeqan}{\end{eqnarray}}

\def\scB{\Scr B}
\def\scS{\Scr H}

\def\U{\mathrm{U}}

\def\cD{\mathcal{D}}

\def\cF{\mathcal{F}}

\def\G_2{\mathrm{G_2}}

\def\G{\mathrm{G}}

\def\R{\mathbb{R}}

\def\cL{\mathcal{L}}

\def\dd{\mathrm{d}}

\newcommand{\escal}[1]{\langle#1\rangle}

\def\cU{\mathcal{U}}

\renewcommand{\ker}{\mathrm{Ker}}

\newcolumntype{P}[1]{>{\centering\arraybackslash}p{#1}}

\usepackage[math]{anttor}
\usepackage{orcidlink}

\allowdisplaybreaks